\newtheorem{theorem}{\indent Theorem}[section]
\newtheorem{proposition}[theorem]{\indent Proposition}
\newtheorem{definition}[theorem]{\indent Definition}
\newtheorem{lemma}{\indent Lemma}[section]
\newtheorem{remark}[theorem]{\indent Remark}%SetFonts
\begin{document}
\title{Global Carleman estimates for the fourth order parabolic equations and application to null controllability}
\author{
Bo You\footnote{Email address: youb2013@xjtu.edu.cn}
 \\
{\small School of Mathematics and Statistics, Xi'an Jiaotong University} \\
\small Xi'an, 710049, P. R. China\\
Fang Li\footnote{Email address: fli@xidian.edu.cn}
	 \\
	{\small School of Mathematics and Statistics, Xidian University} \\
	\small Xi'an, 710071, P. R. China
}

%\date{July 2, 2022}							% Activate to display a given date or no date
\maketitle

\begin{center}
\begin{abstract}
The main objective of this paper is to establish the null controllability for the fourth order semilinear parabolic equations with the nonlinearities involving the state and its gradient up to second order. First of all, based on optimal control theory of partial differential equations and global Carleman estimates obtained in \cite{gs} for fourth order parabolic equation with $L^2(Q)$-external force, we establish the global Carleman estimates for the $L^2(Q)$ weak solutions of the same system with a low regularity external term and some linear terms including the derivatives of the state up to second order.  Then we prove the null controllability of the fourth order semilinear parabolic equations by such global Carleman estimates and the Leray-Schauder's fixed points theorem.

\textbf{Keywords}:  Carleman estimates; Null controllability; Fourth order parabolic equations; Leray-Schauder's fixed points theorem.

\textbf{Mathematics Subject Classification (2010)} : 35Q30; 93C20; 37C50; 76B75; 34D06.
\end{abstract}
\end{center}

\section{Introduction}
\def\theequation{1.\arabic{equation}}\makeatother
\setcounter{equation}{0}
Let $\mathcal{O}\subset \mathbb{R}^n (n\geq 2)$ be a nonempty bounded connected open set with smooth boundary $\partial\mathcal{O},$ $T>0$ and $\omega\subset\mathcal{O}$ is a small nonempty open subset  which is usually referred to as a control domain. Denote by $Q=\mathcal{O}\times (0,T),$ $\Sigma=\partial\mathcal{O}\times (0,T),$ $Q_\omega=\omega\times(0,T).$

In this paper, we mainly consider the null controllability for the fourth order semilinear parabolic equation:
\begin{equation}\label{eq1.1}
\begin{cases}
Ly=\frac{\partial y}{\partial t}+\Delta^2 y+a_0y+B_0\cdot\nabla y+D:\nabla^2y+a_1\Delta y=F(y,\nabla y, \nabla^2y)+v\chi_\omega+g,\,\,\,\,\forall\,\,\,(x,t)\in Q,\\
y=\Delta y=0,\,\,\,\,\,\forall\,\,\,\,(x,t)\in\Sigma,\\
y(x,0)=y_0(x),\,\,\,\,\forall\,\,\,\,x\in\mathcal{O}.
\end{cases}
\end{equation}
Here, the functions $a_0,$ $a_1\in L^{\infty}(Q;\mathbb{R}),$ $B_0=(B_{01},B_{02},\cdots,B_{0n})\in L^{\infty}(Q;\mathbb{R}^n),$ $D=(D_{ij})_{n\times n}\in L^{\infty}(Q;\mathbb{R}^{n^2}),$ $g\in L^2(Q)$ is a given externally applied force, the function $F:\mathbb{R}\times\mathbb{R}^2\times\mathbb{R}^{n^2}\rightarrow\mathbb{R}$ is locally Lipschitz continous, $\chi_\omega$ is the characteristic function of the set $\omega,$ $y_0$ is the given initial data and $v\in L^2(Q_\omega)$ is a control function to be determined. The unknown function $y$ may represent a scaled film height and the term $\Delta^2y$ represents the capillarity-driven surface diffusion.

In recent several years, there are some results about the controllability for fourth order parabolic equations in both one dimension (see \cite{cn5, cn7, cn6, ce1, ce, gp, hsv, lgm}) and the higher dimensions (see \cite{dji, gs, kk, lq, yh}). In particular, the approximate controllability and non-approximate controllability of higher order parabolic equations were studied in \cite{dji}. The author in \cite{yh} proved the null controllability of fourth order parabolic equations by using the ideas of \cite{lg}. It is worthy to mention that the Carleman inequality for a fourth order parabolic equation with $n\geq 2$ was first established in \cite{gs}. Later, the author in \cite{kk} proved the null controllability and the exact controllability to the trajectories at any time $T>0$ for the fourth order semi-linear parabolic equations with a control function acting at the interior.  Recently, the null controllability for fourth order stochastic parabolic equations was proved by duality arguments and a new global Carleman estimates in \cite{lq}. A unified weighted inequality for fourth-order partial differential operators was given in \cite {cy}. Moreover, they applied it to obtain the log-type stabilization result for the plate equation. As far as we know, the null controllability of problem \eqref{eq1.1} is equivalent to the observation of its corresponding adjoint problem in the case that $F\equiv 0.$ To develop the required observability inequality, it is very necessary to establish the global Carleman estimates of the adjoint system of problem \eqref{eq1.1} with $F\equiv 0$. However, there is no results concerning the desired global Carleman estimates and null controllability of problem \eqref{eq1.1}. Thus, we will establish the desired global Carleman estimates and prove the null controllability of problem \eqref{eq1.1}.

The rest of this paper is organized as follows: in Section 2, we will recall a well-established global Carleman estimates and give a definition of $L^2(Q)$-weak solution defined by transposition. In Section 3, we will establish the desired global Carleman inequality. Section 4 is devoted to the null controllability for the fourth order semilinear parabolic equation. 
\section{\bf Preliminaries}
\def\theequation{2.\arabic{equation}}\makeatother
\setcounter{equation}{0}
To formulate our Carleman inequality, we first introduce the following weight functions.
\begin{lemma}(\cite{fav})\label{le2.1}
Let $\omega_0\subset\omega$ be an arbitrary fixed subdomain of $\mathcal{O}$ such that $\overline{\omega_0}\subset\omega.$ Then there exists a function $\eta\in\mathcal{C}^4(\overline{\mathcal{O}})$ such that
\begin{align*}
\eta(x)>0,\,\,\,\,\forall\,\,\,x\in\mathcal{O};\,\,\,
\eta(x)=0,\,\,\,\,\forall\,\,\,x\in\partial\mathcal{O};\,\,\,
|\nabla\eta(x)|>0,\,\,\,\,\forall\,\,\,x\in\overline{\mathcal{O}\backslash\omega_0}.
\end{align*}
\end{lemma}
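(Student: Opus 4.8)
The plan is to reduce the statement to a standard fact of differential topology: after composing with a suitable diffeomorphism of $\overline{\mathcal{O}}$, one may assume that all interior critical points of the weight lie inside the prescribed subdomain $\omega_0$. First I would build an auxiliary function with the correct behaviour near $\partial\mathcal{O}$. Since $\partial\mathcal{O}$ is smooth it admits a collar on which the distance to the boundary is smooth, and using it I would construct $\psi_0\in\mathcal{C}^\infty(\overline{\mathcal{O}})$ with $\psi_0>0$ in $\mathcal{O}$, $\psi_0=0$ on $\partial\mathcal{O}$, and nonvanishing normal derivative on $\partial\mathcal{O}$ (take $\psi_0$ equal to the boundary distance in a thin collar and extend it positively into $\mathcal{O}$). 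Then $\nabla\psi_0\neq 0$ on a collar $V$ of $\partial\mathcal{O}$, so every critical point of $\psi_0$ lies in the compact set $K=\overline{\mathcal{O}}\setminus V\subset\mathcal{O}$. Because Morse functions are dense, I would next perturb $\psi_0$ by a $\mathcal{C}^\infty$ function compactly supported in $\mathcal{O}$ and arbitrarily small in $\mathcal{C}^1$; this yields $\psi\in\mathcal{C}^\infty(\overline{\mathcal{O}})$ with the same boundary behaviour (the perturbation vanishes near $\partial\mathcal{O}$) and only finitely many nondegenerate critical points $c_1,\dots,c_k$, all in the interior $\mathcal{O}$.

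The crucial step is to transport these critical points into $\omega_0$. Since $\mathcal{O}$ is connected, its diffeomorphism group acts transitively on $k$-tuples of distinct interior points through compactly supported diffeomorphisms; concretely, one joins chosen target points to the $c_i$ by embedded arcs inside $\mathcal{O}$ and flows along vector fields supported in thin tubes about the arcs. Fixing distinct points $q_1,\dots,q_k\in\omega_0$, I obtain a diffeomorphism $\Phi$ of $\overline{\mathcal{O}}$ equal to the identity near $\partial\mathcal{O}$ with $\Phi(q_i)=c_i$ for each $i$. This is where the main difficulty lies: the homogeneity of $\mathrm{Diff}(\mathcal{O})$ must be applied with care when $n=2$, since embedded arcs joining the $c_i$ to the $q_i$ cannot in general be chosen disjoint; in that case one moves the points one at a time, using that the complement of finitely many points in a connected surface of dimension $\geq 2$ remains connected, so that each arc can avoid the remaining points.

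Finally I would set $\eta:=\psi\circ\Phi\in\mathcal{C}^\infty(\overline{\mathcal{O}})\subset\mathcal{C}^4(\overline{\mathcal{O}})$ and verify the three conditions. As $\Phi$ maps $\mathcal{O}$ onto itself and is the identity near $\partial\mathcal{O}$, we get $\eta>0$ in $\mathcal{O}$ and $\eta=0$ on $\partial\mathcal{O}$. Since $D\Phi$ is everywhere invertible, $\nabla\eta(x)=(D\Phi(x))^{\top}\nabla\psi(\Phi(x))$ vanishes precisely when $\Phi(x)\in\{c_1,\dots,c_k\}$, that is, exactly at the points $q_1,\dots,q_k\in\omega_0$. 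Each $q_i$ admits a ball contained in $\omega_0$, so $q_i\notin\overline{\mathcal{O}\setminus\omega_0}$; hence $\nabla\eta$ does not vanish on $\overline{\mathcal{O}\setminus\omega_0}$, which is the last required property.
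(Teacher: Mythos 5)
Your construction is correct, and it is essentially the classical Fursikov--Imanuvilov argument: start from a function with nonvanishing gradient near $\partial\mathcal{O}$, perturb it (keeping the boundary behaviour) into a Morse function whose finitely many critical points lie in the interior, and push those critical points into $\omega_0$ by a diffeomorphism that is the identity near $\partial\mathcal{O}$; the only step you state a little loosely is the Morse perturbation, which must be the localized (cut-off) version so as not to destroy the collar where $\nabla\psi_0\neq 0$, but that is standard. Note that the paper itself gives no proof of this lemma --- it is simply quoted from the reference \cite{fav} --- and your sketch is precisely the construction found there.
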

In order to state our Carleman inequality, we define some weight functions:
\begin{align}\label{eq2.1}
\alpha(x,t)=\frac{e^{\lambda(2\|\eta\|_{L^{\infty}(\mathcal{O})}+\eta(x))}-e^{4\lambda\|\eta\|_{L^{\infty}(\mathcal{O})}}}{\sqrt{t(T-t)}},\,\,\,\xi(x,t)=\frac{e^{\lambda(2\|\eta\|_{L^{\infty}(\mathcal{O})}+\eta(x))}}{\sqrt{t(T-t)}}.
\end{align}
Moreover, they possess the following properties:
\begin{align}\label{eq2.2}
\nabla\alpha=\nabla\xi=\lambda\xi\nabla\eta,\,\,
\xi^{-1}\leq \frac{T}{2},\,\,
|\alpha_t|+|\xi_t|\leq\frac{T}{2}\xi^3,\,\,\forall\,\,(x,t)\in Q.
\end{align}
In \cite{gs}, the global Carleman estimates for fourth order parabolic equations with $L^2(Q)$-source term has been established. Here, we state it as follows. 
\begin{lemma}\label{le2.2}(see \cite{gs})
Assume that $z_0\in L^2(\mathcal{O}),$ $g\in L^2(Q)$ and the function $\alpha,$ $\xi$ are defined by \eqref{eq2.1}. Then there exists $\hat{\lambda}>0$ such that for an arbitrary $\lambda\geq \hat{\lambda},$ we can choose $s_0=s_0(\lambda)>0$ satisfying: there exists a constant $C=C(\lambda)>0$ independent of $s,$ such that the solution $z\in L^2(Q)$ to problem
\begin{equation}\label{eq2.3}
\begin{cases}
L^*z=-\frac{\partial z}{\partial t}+\Delta^2 z=g,\,\,\,\,\forall\,\,\,(x,t)\in Q,\\
z=\Delta z=0,\,\,\,\,\,\forall\,\,\,\,(x,t)\in\Sigma,\\
z(x,T)=z_0(x),\,\,\,\,\forall\,\,\,\,x\in\mathcal{O}
\end{cases}
\end{equation}
satisfies the following inequality:
\begin{align*}
&\int_Qe^{2s\alpha}\left(s^6\lambda^8\xi^6|z|^2+s^4\lambda^6\xi^4|\nabla z|^2+s^3\lambda^4\xi^3|\Delta z|^2+s^2\lambda^4\xi^2|\nabla^2z|^2+s\lambda^2\xi|\nabla\Delta z|^2\right)\,dxdt\\
&+\int_Qe^{2s\alpha}\left(\frac{1}{s\xi}(|z_t|^2+|\Delta^2z|^2)\right)\,dxdt\\
&\leq C\left(\int_{Q_\omega}s^7\lambda^8\xi^7|z|^2e^{2s\alpha}\,dxdt+\int_Q|g|^2e^{2s\alpha}\,dxdt\right)
\end{align*}
for any $\lambda\geq \hat{\lambda}$  and any $s\geq s_0(\lambda)(\sqrt{T}+T).$ 
\end{lemma}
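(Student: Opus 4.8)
The plan is to follow the classical Fursikov--Imanuvilov strategy, adapted to the biharmonic operator: conjugate the equation by the Carleman weight, split the conjugated operator into its self-adjoint and skew-adjoint parts, and extract the left-hand side from the cross term. First I would set $w=e^{s\alpha}z$ and compute the conjugated operator $Pw:=e^{s\alpha}L^{*}(e^{-s\alpha}w)$. The key simplification for the fourth-order case is that conjugation is multiplicative, so $e^{s\alpha}\Delta^{2}e^{-s\alpha}=A^{2}$, where $A:=e^{s\alpha}\Delta e^{-s\alpha}=\Delta-2s\nabla\alpha\cdot\nabla+\left(s^{2}|\nabla\alpha|^{2}-s\Delta\alpha\right)$ is the conjugated Laplacian; using \eqref{eq2.2}, each spatial derivative of $\alpha$ contributes a factor $\lambda\xi$, so the top-order term of $A$ is $s^{2}\lambda^{2}\xi^{2}|\nabla\eta|^{2}$. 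Together with $-\partial_{t}(e^{-s\alpha}w)=e^{-s\alpha}(-w_{t}+s\alpha_{t}w)$ this yields $Pw=e^{s\alpha}g=:G$.

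Next I would write $A=A_{s}+A_{a}$ with $A_{s}:=\Delta+s^{2}|\nabla\alpha|^{2}$ self-adjoint and $A_{a}:=-2s\nabla\alpha\cdot\nabla-s\Delta\alpha$ skew-adjoint, so that $A^{2}=(A_{s}^{2}+A_{a}^{2})+(A_{s}A_{a}+A_{a}A_{s})$, the first bracket being self-adjoint and the second skew-adjoint. Adding the time part, I set $P_{1}:=A_{s}^{2}+A_{a}^{2}+s\alpha_{t}$ (self-adjoint) and $P_{2}:=A_{s}A_{a}+A_{a}A_{s}-\partial_{t}$ (skew-adjoint), absorbing all genuinely lower-order remainders into a term $Rw$. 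From $P_{1}w+P_{2}w=G-Rw$ and the elementary inequality $\|a+b\|^{2}\geq 0$ applied after expansion,
\begin{align*}
\|P_{1}w\|_{L^{2}(Q)}^{2}+\|P_{2}w\|_{L^{2}(Q)}^{2}+2\left(P_{1}w,P_{2}w\right)_{L^{2}(Q)}\leq 2\|G\|_{L^{2}(Q)}^{2}+2\|Rw\|_{L^{2}(Q)}^{2}.
\end{align*}
The heart of the proof is the evaluation of the cross term $2(P_{1}w,P_{2}w)_{L^{2}(Q)}$: because $P_{1}$ is self-adjoint and $P_{2}$ skew-adjoint, it reduces to a commutator, and the many integrations by parts it requires produce the weighted quantities $\int_{Q}\bigl(s^{6}\lambda^{8}\xi^{6}|w|^{2}+s^{4}\lambda^{6}\xi^{4}|\nabla w|^{2}+\cdots\bigr)$ appearing on the left, with positive sign. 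Here Lemma~\ref{le2.1} is decisive: the bound $|\nabla\eta|>0$ on $\overline{\mathcal{O}\setminus\omega_{0}}$ makes these terms coercive away from the control region, while the bad contributions supported in $\omega_{0}$ are transferred to the observation integral $\int_{Q_{\omega}}$ by a cutoff argument.

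I expect the main obstacle to be twofold. First, since the operator is fourth order, every integration by parts in the cross term generates boundary integrals on $\Sigma$ carrying $w,\nabla w,\Delta w$ and $\nabla\Delta w$ against derivatives of $\alpha$; the conditions $z=\Delta z=0$ (hence $w=\Delta w=0$ on $\Sigma$) annihilate some of these, but to kill the rest one must split gradients into tangential and normal components and verify that the surviving normal derivatives enter with the sign dictated by the geometry of $\eta$. Second, the bookkeeping of the numerous intermediate terms of mixed $(s,\lambda)$-order is delicate; the remedy is to order the parameters, first fixing $\lambda\geq\hat{\lambda}$ so large that the extra powers of $\lambda$ in the leading coefficients dominate every lower-$\lambda$ term, and then fixing $s\geq s_{0}(\lambda)(\sqrt{T}+T)$ so large that $\|Rw\|_{L^{2}(Q)}^{2}$ and all remaining lower-$s$ terms are absorbed into the positive left-hand side.

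Finally, undoing the substitution $z=e^{-s\alpha}w$ recovers all the listed terms weighted by $e^{2s\alpha}$. The two remaining terms $\frac{1}{s\xi}(|z_{t}|^{2}+|\Delta^{2}z|^{2})e^{2s\alpha}$ I would obtain a posteriori from the equation \eqref{eq2.3}: writing $\Delta^{2}z=g+z_{t}$ and $z_{t}=\Delta^{2}z-g$ and estimating these against $|g|^{2}$ together with the already-controlled lower-order quantities, the weight $\frac{1}{s\xi}$ being precisely the surplus available from the leading-order analysis.
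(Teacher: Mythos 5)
This lemma is not proved in the paper at all: it is quoted verbatim from \cite{gs}, so there is no in-paper argument to compare against. Your sketch follows the standard Fursikov--Imanuvilov route (conjugate by $e^{s\alpha}$, use $e^{s\alpha}\Delta^{2}e^{-s\alpha}=A^{2}$, split into self-adjoint and skew-adjoint parts, extract positivity from the cross term, absorb with $\lambda$ then $s$), which is a legitimate and essentially the expected strategy for this estimate. The algebraic checks pass: your $A_{a}=-2s\nabla\alpha\cdot\nabla-s\Delta\alpha$ is indeed formally skew-adjoint, $A_{s}A_{a}+A_{a}A_{s}$ is skew-adjoint, and the parameter ordering is right.

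Two concrete points would, however, break the proof as written. First, your parenthetical claim that $z=\Delta z=0$ on $\Sigma$ implies $w=\Delta w=0$ on $\Sigma$ is false: from $\Delta w=e^{s\alpha}\bigl(\Delta z+2s\nabla\alpha\cdot\nabla z+(s^{2}|\nabla\alpha|^{2}+s\Delta\alpha)z\bigr)$ one gets, on $\Sigma$, $\Delta w=2s\,\partial_{\nu}\alpha\,\partial_{\nu}w\neq 0$ in general. The boundary bookkeeping must therefore be done with this relation (all boundary integrals reduce to multiples of $|\partial_{\nu}w|^{2}$ and $|\partial_{\nu}\Delta w|^{2}$ against powers of $s\lambda\xi\,\partial_{\nu}\eta$, and one must check that the net coefficient has the sign of $\partial_{\nu}\eta\leq 0$); your text gestures at this but starts from a wrong vanishing assertion. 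Second, the a posteriori recovery of $\int_{Q}\frac{e^{2s\alpha}}{s\xi}(|z_{t}|^{2}+|\Delta^{2}z|^{2})$ is circular as stated: the equation gives only $\Delta^{2}z-z_{t}=g$, one relation between the two quantities, so you cannot bound each by the other plus $|g|^{2}$. You must first extract one of them from the already-controlled $\|P_{1}w\|_{L^{2}}^{2}$ or $\|P_{2}w\|_{L^{2}}^{2}$ (e.g.\ $P_{2}w$ contains $-w_{t}$ plus spatial terms of order three that are controlled), or run a weighted maximal-regularity argument on $u=(s\xi)^{-1/2}e^{s\alpha}z$; only then does the equation yield the second quantity. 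A further minor caveat: the estimate is asserted for $z\in L^{2}(Q)$, so the integrations by parts should be performed for regularized solutions and passed to the limit, using that $e^{2s\alpha}$ vanishes at $t=0,T$.
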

In what follows, we give the definition of $L^2(Q)$-solutions defined by transposition.
\begin{definition}\label{de2.1}
A function $z\in L^2(Q)$ is called a weak solution to problem 
\begin{equation}\label{eq2.4}
\begin{cases}
P^*z=-\frac{\partial z}{\partial t}+\Delta^2 z+\sum_{i,j=1}^n\frac{\partial^2 (D_{ij}z)}{\partial x_i\partial x_j}+\Delta (a_1z)=g,\,\,\,\,\textit{in}\,\,\,Q,\\
z=\Delta z=0,\,\,\,\,\,\textit{on}\,\,\,\,\Sigma,\\
z(x,T)=z_0(x),\,\,\,\,\textit{in}\,\,\,\,\mathcal{O},
\end{cases}
\end{equation}
if for any $w\in L^2(0,T;H^2(\mathcal{O})\cap H_0^1(\mathcal{O}))$ with $Lw\in L^2(Q),$ $w|_{\partial\mathcal{O}}=\Delta w|_{\partial\mathcal{O}}=0$ and $w(x,0)=0$ for any $x\in\mathcal{O},$ the following equality holds true:
\begin{align*}
(z,Lw)_{L^2(Q)}=\int_Q \left(gw-D:\nabla^2w z-a_1z\Delta w\right)\,dxdt+(z_0,w(x,T))_{L^2(\mathcal{O})}.
\end{align*}
\end{definition}
\section{\bf Global Carleman estimates for $L^2$-solutions defined by transposition}
\def\theequation{3.\arabic{equation}}\makeatother
\setcounter{equation}{0}
In this subsection, we mainly establish the global Carleman estiamtes of problem \eqref{eq2.4}. To this purpose, we define
\begin{align}\label{3.1}
J(w,u)=\frac{1}{2}\int_Q|w|^2e^{-2s\alpha}\,dxdt+\frac{1}{2}\int_{Q_\omega}\frac{|u|^2}{s^7\lambda^8\xi^7}e^{-2s\alpha}\,dxdt.
\end{align}
Now, we will consider the following extremal problem:
\begin{align}\label{3.2}
\inf_{(w,u)\in\mathcal{U}}J(w,u),
\end{align}
where $\mathcal{U}$ is the totality of $(w,u)\in L^2(0,T;H^2(\mathcal{O})\cap H_0^1(\mathcal{O}))\times L^2(Q)$ satisfying
\begin{equation}\label{3.3}
\begin{cases}
Lw=\frac{\partial w}{\partial t}+\Delta^2 w=s^6\lambda^8\xi^6e^{2s\alpha}z+\chi_\omega u,\,\,\,\,\forall\,\,\,(x,t)\in Q,\\
w=\Delta w=0,\,\,\,\,\,\forall\,\,\,\,(x,t)\in\Sigma,\\
w(x,0)=w(x,T)=0,\,\,\,\,\forall\,\,\,\,x\in\mathcal{O}.
\end{cases}
\end{equation}
\begin{proposition}\label{3.4}
For any $z\in L^2(Q),$ there exists a unique solution $(\hat{w},\hat{u})\in L^2(0,T;H^2(\mathcal{O})\cap H_0^1(\mathcal{O}))\times L^2(Q)$ to the extremal problem \eqref{3.1}-\eqref{3.3} and the following conclusion holds:
\begin{align*}
&\int_Q\left(\frac{|\nabla\hat{w}|^2}{(s\lambda\xi)^2}+\frac{|\Delta\hat{w}|^2}{(s\lambda\xi)^4}+\frac{|\nabla^2\hat{w}|^2}{(s\lambda\xi)^4}+|\hat{w}|^2\right)e^{-2s\alpha}\,dxdt+\int_{Q_\omega}\frac{|\hat{u}|^2}{s^7\lambda^8\xi^7}e^{-2s\alpha}\,dxdt\\
\leq&C\int_Qs^6\lambda^8\xi^6|z|^2e^{2s\alpha}\,dxdt
\end{align*}
for any $\lambda\geq \hat{\lambda}$  and any $s\geq s_0(\lambda)(\sqrt{T}+T).$
\end{proposition}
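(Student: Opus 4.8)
The plan is to read \eqref{3.1}--\eqref{3.3} as a convex optimal control problem, write down its optimality system, and then feed the adjoint state into the Carleman inequality of Lemma~\ref{le2.2}. For existence and uniqueness I would argue by the standard variational method: $J$ is continuous, strictly convex and coercive for the weighted norm whose square is $\int_Q|w|^2e^{-2s\alpha}\,dxdt+\int_{Q_\omega}\frac{|u|^2}{s^7\lambda^8\xi^7}e^{-2s\alpha}\,dxdt$, while $\mathcal U$ is an affine, hence convex and weakly closed, subset of $L^2(0,T;H^2(\mathcal O)\cap H_0^1(\mathcal O))\times L^2(Q)$ cut out by the linear constraint \eqref{3.3}. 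The only nonobvious point is that $\mathcal U\neq\emptyset$: the source $s^6\lambda^8\xi^6e^{2s\alpha}z$ lies in $L^2(Q)$ because the decay of $e^{2s\alpha}$ near $t=0,T$ beats the blow-up of $\xi$, so the existence of an admissible pair is exactly the forward null controllability of \eqref{3.3}, which follows by duality from the observability inequality contained in Lemma~\ref{le2.2} for $L^*$. Granting this, the direct method gives a unique minimizer $(\hat w,\hat u)$.

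Next I would derive the first-order optimality conditions. Introducing a multiplier $\hat p$ for the constraint and using $\hat w(0)=\hat w(T)=0$ to kill the temporal boundary terms (and imposing $\hat p=\Delta\hat p=0$ on $\Sigma$), the Euler--Lagrange equations read
\begin{align*}
L^*\hat p=\hat w\,e^{-2s\alpha}\ \text{ in }Q,\qquad \hat u=-s^7\lambda^8\xi^7e^{2s\alpha}\hat p\ \text{ in }Q_\omega.
\end{align*}
Testing $L\hat w=s^6\lambda^8\xi^6e^{2s\alpha}z+\chi_\omega\hat u$ against $\hat p$ and integrating by parts in space and time yields the energy identity $2J(\hat w,\hat u)=\int_Q s^6\lambda^8\xi^6e^{2s\alpha}z\,\hat p\,dxdt$, since $\int_Q(L\hat w)\hat p=\int_Q\hat w\,L^*\hat p=\int_Q|\hat w|^2e^{-2s\alpha}$ and $\int_{Q_\omega}\hat u\hat p=-\int_{Q_\omega}\frac{|\hat u|^2}{s^7\lambda^8\xi^7}e^{-2s\alpha}$.

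The core of the argument is then to bound the right-hand side of this identity. Applying Lemma~\ref{le2.2} to $\hat p$ with datum $g=\hat w e^{-2s\alpha}$, its two right-hand contributions become, via the optimality relations, $\int_Q|g|^2e^{2s\alpha}=\int_Q|\hat w|^2e^{-2s\alpha}$ and $\int_{Q_\omega}s^7\lambda^8\xi^7|\hat p|^2e^{2s\alpha}=\int_{Q_\omega}\frac{|\hat u|^2}{s^7\lambda^8\xi^7}e^{-2s\alpha}$, so that $\int_Q s^6\lambda^8\xi^6e^{2s\alpha}|\hat p|^2\,dxdt\le C\,J(\hat w,\hat u)$. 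Splitting the weights symmetrically and using Cauchy--Schwarz in the energy identity gives $2J(\hat w,\hat u)\le\big(C\,J(\hat w,\hat u)\big)^{1/2}\big(\int_Q s^6\lambda^8\xi^6e^{2s\alpha}|z|^2\big)^{1/2}$, whence after absorption $J(\hat w,\hat u)\le C\int_Q s^6\lambda^8\xi^6e^{2s\alpha}|z|^2\,dxdt$. This already controls the $|\hat w|^2$ and $|\hat u|^2$ terms of the claim.

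Finally, the derivative terms, which $J$ does not see, I would obtain by weighted parabolic energy estimates directly on the state equation. Multiplying $L\hat w=s^6\lambda^8\xi^6e^{2s\alpha}z+\chi_\omega\hat u$ by $(s\lambda\xi)^{-4}e^{-2s\alpha}\hat w$ and integrating by parts twice in space produces the leading term $\int_Q\frac{|\Delta\hat w|^2}{(s\lambda\xi)^4}e^{-2s\alpha}$; the $\partial_t$ contribution is harmless because $|\alpha_t|+|\xi_t|\le\frac T2\xi^3$ and $\xi^{-1}\le\frac T2$, the right-hand side is handled by Young's inequality against the already-controlled quantities, and the terms arising from the $x$-dependence of the weight (through $\nabla\alpha=\lambda\xi\nabla\eta$) are cross terms in $\nabla\hat w$ and $\hat w$. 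The gradient term $\int_Q\frac{|\nabla\hat w|^2}{(s\lambda\xi)^2}e^{-2s\alpha}$ follows by interpolation between $|\hat w|^2e^{-2s\alpha}$ and $\frac{|\Delta\hat w|^2}{(s\lambda\xi)^4}e^{-2s\alpha}$, and the Hessian term is controlled by the Laplacian term by integration by parts up to lower-order remainders; coupling the gradient and Laplacian inequalities and taking $s$ large absorbs every error into the left-hand side. I expect this last step to be the main obstacle: the bookkeeping must confirm that each term generated by differentiating the $t$- and $x$-dependent weights is genuinely lower order, so that the coupled system of weighted estimates closes for $s\ge s_0(\lambda)(\sqrt T+T)$; the nonemptiness of $\mathcal U$ is the only other delicate point, and it is a direct consequence of Lemma~\ref{le2.2}.
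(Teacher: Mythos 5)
Your proposal follows essentially the same route as the paper: the direct method for existence, the Lagrange optimality system $L^*\hat p=\hat w e^{-2s\alpha}$ with $\hat u=-s^7\lambda^8\xi^7e^{2s\alpha}\hat p$ on $Q_\omega$, the duality identity $2J(\hat w,\hat u)=\int_Q s^6\lambda^8\xi^6 e^{2s\alpha}z\hat p$, Lemma~\ref{le2.2} applied to $\hat p$ followed by Cauchy--Schwarz and absorption, and then weighted energy estimates on the state equation (multiplication by $(s\lambda\xi)^{-4}e^{-2s\alpha}\hat w$) coupled with an interpolation of the gradient term. The only cosmetic difference is that the paper obtains the Hessian bound by conjugating with the weight (setting $\tilde w=\hat w e^{-s\alpha}/(s\lambda\xi)^2$) and invoking $L^2$ elliptic regularity for the Dirichlet Laplacian, which is the same mechanism as your integration-by-parts argument.
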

\begin{proof}
To begin with, we can prove the existence of a unique solution $(\hat{w},\hat{u})\in L^2(0,T;H^2(\mathcal{O})\cap H_0^1(\mathcal{O}))\times L^2(Q)$ to the extremal problem \eqref{3.1}-\eqref{3.3} by the standard arguments of optimal control of partial differential equations (see \cite{avm, ljl}).

Applying the Lagrange principle (see \cite{avm}) to problem \eqref{3.1}-\eqref{3.3}, we obtain the optimality system for this problem:
\begin{equation}\label{3.5}
\begin{cases}
L\hat{w}=\frac{\partial \hat{w}}{\partial t}+\Delta^2 \hat{w}=s^6\lambda^8\xi^6e^{2s\alpha}z+\chi_\omega \hat{u},\,\,\,\,\textit{in}\,\,\,Q,\\
\hat{w}=\Delta \hat{w}=0,\,\,\,\,\,\textit{on}\,\,\,\,\Sigma,\\
\hat{w}(x,0)=\hat{w}(x,T)=0,\,\,\,\,\textit{in}\,\,\,\,\mathcal{O}
\end{cases}
\end{equation}
and
\begin{equation}\label{3.6}
\begin{cases}
L^*p=-\frac{\partial p}{\partial t}+\Delta^2 p=\hat{w}e^{-2s\alpha},\,\,\,\,\textit{in}\,\,\,Q,\\
p=\Delta p=0,\,\,\,\,\,\textit{on}\,\,\,\,\Sigma,\\
p(x,T)=0,\,\,\,\,\textit{in}\,\,\,\,\mathcal{O},\\
p+\frac{\hat{u}}{s^7\lambda^8\xi^7}e^{-2s\alpha}=0,\,\,\,\,\textit{in}\,\,\,\,Q_\omega.
\end{cases}
\end{equation}
Employing Lemma \ref{le2.2} to problem \eqref{3.6}, yields
\begin{align}\label{3.7}
\nonumber\int_Qs^6\lambda^8\xi^6|p|^2e^{2s\alpha}\,dxdt\leq &C\left(\int_{Q_\omega}s^7\lambda^8\xi^7|p|^2e^{2s\alpha}\,dxdt+\int_Q|\hat{w}|^2e^{-2s\alpha}\,dxdt\right)\\
\nonumber=&C\left(\int_{Q_\omega}\frac{|\hat{u}|^2}{s^7\lambda^8\xi^7}e^{-2s\alpha}\,dxdt+\int_Q|\hat{w}|^2e^{-2s\alpha}\,dxdt\right)\\
=&2CJ(\hat{w},\hat{u})
\end{align}
for any $\lambda\geq \hat{\lambda}$  and any $s\geq s_0(\lambda)(\sqrt{T}+T).$

Combining problem \eqref{3.5} with problem \eqref{3.6}, we obtain
\begin{align*}
0=&\int_Q\left(L\hat{w}-s^6\lambda^8\xi^6e^{2s\alpha}z-\chi_\omega \hat{u}\right)p\,dxdt\\
=&\int_QL^*p\hat{w}\,dxdt-\int_Qs^6\lambda^8\xi^6e^{2s\alpha}zp\,dxdt+\int_{Q_\omega}\frac{|\hat{u}|^2}{s^7\lambda^8\xi^7}e^{-2s\alpha}\,dxdt\\
=&\int_Q|\hat{w}|^2e^{-2s\alpha}\,dxdt-\int_Qs^6\lambda^8\xi^6e^{2s\alpha}zp\,dxdt+\int_{Q_\omega}\frac{|\hat{u}|^2}{s^7\lambda^8\xi^7}e^{-2s\alpha}\,dxdt,
\end{align*}
which implies that
\begin{align*}
2J(\hat{w},\hat{u})=&\int_Q|\hat{w}|^2e^{-2s\alpha}\,dxdt+\int_{Q_\omega}\frac{|\hat{u}|^2}{s^7\lambda^8\xi^7}e^{-2s\alpha}\,dxdt\\
=&\int_Qs^6\lambda^8\xi^6zpe^{2s\alpha}\,dxdt.
\end{align*}
Thus, we deduce from H\"{o}lder's inequality and inequality \eqref{3.7} that
\begin{align*}
2J(\hat{w},\hat{u})=&\int_Q|\hat{w}|^2e^{-2s\alpha}\,dxdt+\int_{Q_\omega}\frac{|\hat{u}|^2}{s^7\lambda^8\xi^7}e^{-2s\alpha}\,dxdt\\
=&\left(\int_Qs^6\lambda^8\xi^6|z|^2e^{2s\alpha}\,dxdt\right)^{\frac{1}{2}}\left(\int_Qs^6\lambda^8\xi^6|p|^2e^{2s\alpha}\,dxdt\right)^{\frac{1}{2}}\\
\leq&C\left(\int_Qs^6\lambda^8\xi^6|z|^2e^{2s\alpha}\,dxdt\right)^{\frac{1}{2}}\sqrt{J(\hat{w},\hat{u})}
\end{align*}
for any $\lambda\geq \hat{\lambda}$  and any $s\geq s_0(\lambda)(\sqrt{T}+T),$ which entails that
\begin{align}\label{3.8}
J(\hat{w},\hat{u})\leq C\int_Qs^6\lambda^8\xi^6|z|^2e^{2s\alpha}\,dxdt
\end{align} 
for any $\lambda\geq \hat{\lambda}$  and any $s\geq s_0(\lambda)(\sqrt{T}+T).$

Multiplying the first equation of problem \eqref{3.5} by $\frac{\hat{w}}{(s\lambda\xi)^4}e^{-2s\alpha}$  and integrating by parts, we get
\begin{align}\label{3.9}
\int_Q\left(\frac{\partial \hat{w}}{\partial t}+\Delta^2 \hat{w}-s^6\lambda^8\xi^6e^{2s\alpha}z-\chi_\omega \hat{u}\right)\frac{\hat{w}}{(s\lambda\xi)^4}e^{-2s\alpha}\,dxdt=0.
\end{align}
In what follows, we will estimate each term of the right hand side of equality \eqref{3.9} by using H\"{o}lder's inequality, Young's inequality, inequality \eqref{3.8} along with the properties of the weight function \eqref{eq2.2}.
\begin{align*}
\int_Q\frac{\partial \hat{w}}{\partial t}\frac{\hat{w}}{(s\lambda\xi)^4}e^{-2s\alpha}\,dxdt=-&\frac{1}{2}\int_Q\frac{\partial}{\partial t}\left(\frac{1}{(s\lambda\xi)^4}e^{-2s\alpha}\right)|\hat{w}|^2\,dxdt\\
=&\frac{1}{2}\int_Q\left(\frac{4\xi_t}{(s\lambda)^4\xi^5}+\frac{2\alpha_t}{(\lambda\xi)^4s^3}\right)|\hat{w}|^2e^{-2s\alpha}\,dxdt,
\end{align*}
which entails that
\begin{align}\label{3.11}
\nonumber\left|\int_Q\frac{\partial \hat{w}}{\partial t}\frac{\hat{w}}{(s\lambda\xi)^4}e^{-2s\alpha}\,dxdt\right|\leq &C\int_Q|\hat{w}|^2e^{-2s\alpha}\,dxdt\\
\leq&C\int_Qs^6\lambda^8\xi^6|z|^2e^{2s\alpha}\,dxdt,
\end{align}

\begin{align}\label{3.12}
\nonumber\left|-\int_Qs^6\lambda^8\xi^6e^{2s\alpha}z\frac{\hat{w}}{(s\lambda\xi)^4}e^{-2s\alpha}\,dxdt\right|=&\left|\int_Qs^2\lambda^4\xi^2z\hat{w}\,dxdt\right|\\
\nonumber\leq&\left(\int_Qs^4\lambda^8\xi^4|z|^2e^{2s\alpha}\,dxdt\right)^{\frac{1}{2}}\left(\int_Q|\hat{w}|^2e^{-2s\alpha}\,dxdt\right)^{\frac{1}{2}}\\
\leq&C\int_Qs^6\lambda^8\xi^6|z|^2e^{2s\alpha}\,dxdt,
\end{align}
\begin{align}\label{3.13}
\nonumber\left|-\int_Q\chi_\omega \hat{u}\frac{\hat{w}}{(s\lambda\xi)^4}e^{-2s\alpha}\,dxdt\right|\leq&\left(\int_{Q_\omega}\frac{|\hat{u}|^2}{s^7\lambda^8\xi^7}e^{-2s\alpha}\,dxdt\right)^{\frac{1}{2}}\left(\int_{Q_\omega}\frac{|\hat{w}|^2}{s\xi}e^{-2s\alpha}\,dxdt\right)^{\frac{1}{2}}\\
\leq&C\int_Qs^6\lambda^8\xi^6|z|^2e^{2s\alpha}\,dxdt,
\end{align}

\begin{align}\label{3.14}
\nonumber&\int_Q\Delta^2 \hat{w}\frac{\hat{w}}{(s\lambda\xi)^4}e^{-2s\alpha}\,dxdt=-\int_Q\nabla\Delta\hat{w}\cdot\left(\frac{\nabla\hat{w}}{(s\lambda\xi)^4}-4\frac{\hat{w}\nabla\eta}{(s\xi)^4\lambda^3}-2\frac{\hat{w}\nabla\eta}{(s\lambda\xi)^3}\right)e^{-2s\alpha}\,dxdt\\
\nonumber=&\int_Q\frac{|\Delta\hat{w}|^2}{(s\lambda\xi)^4}e^{-2s\alpha}\,dxdt+\int_Q\Delta\hat{w}\left(-\frac{8\nabla\hat{w}\cdot\nabla\eta}{(s\xi)^4\lambda^3}-\frac{4\nabla\hat{w}\cdot\nabla\eta}{(s\lambda\xi)^3}-4\frac{\hat{w}\Delta\eta}{(s\xi)^4\lambda^3}\right)e^{-2s\alpha}\,dxdt\\
&+\int_Q\Delta\hat{w}\left(8\frac{\hat{w}|\nabla\eta|^2}{(s\xi)^3\lambda^2}+16\frac{\hat{w}|\nabla\eta|^2}{(s\xi)^4\lambda^2}-2\frac{\hat{w}\Delta\eta}{(s\lambda\xi)^3}+6\frac{\hat{w}|\nabla\eta|^2}{(s\xi)^3\lambda^2}+4\frac{\hat{w}|\nabla\eta|^2}{(s\lambda\xi)^2}\right)e^{-2s\alpha}\,dxdt.
\end{align}
We infer from equality \eqref{3.9}, inequalities \eqref{3.11}-\eqref{3.14} that for any $\lambda\geq \hat{\lambda}$  and any $s\geq s_0(\lambda)(\sqrt{T}+T),$
\begin{align*}
&\int_Q\frac{|\Delta\hat{w}|^2}{(s\lambda\xi)^4}e^{-2s\alpha}\,dxdt\leq\left|\int_Q\Delta\hat{w}\left(-\frac{8\nabla\hat{w}\cdot\nabla\eta}{(s\xi)^4\lambda^3}-\frac{4\nabla\hat{w}\cdot\nabla\eta}{(s\lambda\xi)^3}-4\frac{\hat{w}\Delta\eta}{(s\xi)^4\lambda^3}\right)e^{-2s\alpha}\,dxdt\right|\\
&+\left|\int_Q\Delta\hat{w}\left(8\frac{\hat{w}|\nabla\eta|^2}{(s\xi)^3\lambda^2}+16\frac{\hat{w}|\nabla\eta|^2}{(s\xi)^4\lambda^2}-2\frac{\hat{w}\Delta\eta}{(s\lambda\xi)^3}+6\frac{\hat{w}|\nabla\eta|^2}{(s\xi)^3\lambda^2}+4\frac{\hat{w}|\nabla\eta|^2}{(s\lambda\xi)^2}\right)e^{-2s\alpha}\,dxdt\right|\\
&+C\int_Qs^6\lambda^8\xi^6|z|^2e^{2s\alpha}\,dxdt\\
\leq&\frac{1}{2}\int_Q\frac{|\Delta\hat{w}|^2}{(s\lambda\xi)^4}e^{-2s\alpha}\,dxdt+C\int_Q\left(\frac{|\nabla\hat{w}|^2}{(s\lambda\xi)^2}+|\hat{w}|^2\right)e^{-2s\alpha}\,dxdt+C\int_Qs^6\lambda^8\xi^6|z|^2e^{2s\alpha}\,dxdt\\
\leq&\frac{1}{2}\int_Q\frac{|\Delta\hat{w}|^2}{(s\lambda\xi)^4}e^{-2s\alpha}\,dxdt+C\int_Q\frac{|\nabla\hat{w}|^2}{(s\lambda\xi)^2}e^{-2s\alpha}\,dxdt+C\int_Qs^6\lambda^8\xi^6|z|^2e^{2s\alpha}\,dxdt,
\end{align*}
which implies that there exists a generic positive constant $C_0,$ such that for any $\lambda\geq \hat{\lambda}$  and any $s\geq s_0(\lambda)(\sqrt{T}+T),$
\begin{align}\label{3.15}
\int_Q\frac{|\Delta\hat{w}|^2}{(s\lambda\xi)^4}e^{-2s\alpha}\,dxdt
\leq C_0\int_Q\frac{|\nabla\hat{w}|^2}{(s\lambda\xi)^2}e^{-2s\alpha}\,dxdt+C\int_Qs^6\lambda^8\xi^6|z|^2e^{2s\alpha}\,dxdt.
\end{align}
Thanks to 
\begin{align*}
&\int_Q\frac{|\nabla\hat{w}|^2}{(s\lambda\xi)^2}e^{-2s\alpha}\,dxdt=-\int_Q\frac{\hat{w}\Delta\hat{w}}{(s\lambda\xi)^2}e^{-2s\alpha}\,dxdt+2\int_Q\left(\frac{\hat{w}\nabla\hat{w}\cdot\nabla\eta}{s\lambda\xi}+\frac{\hat{w}\nabla\hat{w}\cdot\nabla\eta}{(s\xi)^2\lambda}\right)e^{-2s\alpha}\,dxdt\\
\leq&\frac{1}{4C_0}\int_Q\frac{|\Delta\hat{w}|^2}{(s\lambda\xi)^4}e^{-2s\alpha}\,dxdt+C\int_Q|\hat{w}|^2e^{-2s\alpha}\,dxdt+\frac{1}{2}\int_Q\frac{|\nabla\hat{w}|^2}{(s\lambda\xi)^2}e^{-2s\alpha}\,dxdt\\
\leq&\frac{1}{4C_0}\int_Q\frac{|\Delta\hat{w}|^2}{(s\lambda\xi)^4}e^{-2s\alpha}\,dxdt+C\int_Qs^6\lambda^8\xi^6|z|^2e^{2s\alpha}\,dxdt+\frac{1}{2}\int_Q\frac{|\nabla\hat{w}|^2}{(s\lambda\xi)^2}e^{-2s\alpha}\,dxdt,
\end{align*}
we obtain
\begin{align}\label{3.16}
\int_Q\frac{|\nabla\hat{w}|^2}{(s\lambda\xi)^2}e^{-2s\alpha}\,dxdt
\leq\frac{1}{2C_0}\int_Q\frac{|\Delta\hat{w}|^2}{(s\lambda\xi)^4}e^{-2s\alpha}\,dxdt+C\int_Qs^6\lambda^8\xi^6|z|^2e^{2s\alpha}\,dxdt.
\end{align}
Therefore, we obtain 
\begin{align}\label{3.17}
\int_Q\frac{|\nabla\hat{w}|^2}{(s\lambda\xi)^2}e^{-2s\alpha}\,dxdt+\int_Q\frac{|\Delta\hat{w}|^2}{(s\lambda\xi)^4}e^{-2s\alpha}\,dxdt\leq C\int_Qs^6\lambda^8\xi^6|z|^2e^{2s\alpha}\,dxdt
\end{align}
for any $\lambda\geq \hat{\lambda}$  and any $s\geq s_0(\lambda)(\sqrt{T}+T).$

Denote by $\tilde{w}=\frac{\hat{w}e^{-s\alpha}}{(s\lambda\xi)^2 },$ then
\begin{align}\label{3.18}
\frac{\partial\tilde{w}}{\partial x_i}=\left(\frac{1}{(s\lambda\xi)^2}\frac{\partial \hat{w}}{\partial x_i}-\frac{1}{s\lambda\xi}\frac{\partial\eta}{\partial x_i}\hat{w}-\frac{2}{s^2\lambda\xi^2}\frac{\partial\eta}{\partial x_i}\hat{w}\right)e^{-s\alpha}
\end{align}
and
\begin{align}\label{3.19}
\nonumber\frac{\partial^2\tilde{w}}{\partial x_i\partial x_j}=&\left(\frac{4}{(s\xi)^2}\frac{\partial\eta}{\partial x_i}\frac{\partial \eta}{\partial x_j}\hat{w}-\frac{2}{s^2\lambda\xi^2}\frac{\partial^2\eta}{\partial x_i\partial x_j}\hat{w}-\frac{1}{s\lambda\xi}\frac{\partial\eta}{\partial x_i}\frac{\partial \hat{w}}{\partial x_j}-\frac{1}{s\lambda\xi}\frac{\partial\eta}{\partial x_j}\frac{\partial \hat{w}}{\partial x_i}\right)e^{-s\alpha}\\
\nonumber&+\left(\frac{1}{(s\lambda\xi)^2}\frac{\partial^2 \hat{w}}{\partial x_i\partial x_j}+\frac{3}{s\xi}\frac{\partial\eta}{\partial x_i}\frac{\partial \eta}{\partial x_j}\hat{w}-\frac{2}{s^2\lambda\xi^2}\frac{\partial\eta}{\partial x_i}\frac{\partial \hat{w}}{\partial x_j}-\frac{2}{s^2\lambda\xi^2}\frac{\partial \hat{w}}{\partial x_i}\frac{\partial \eta}{\partial x_j}\right)e^{-s\alpha}\\
&+\left(\frac{\partial \eta}{\partial x_i}\frac{\partial \eta}{\partial x_j}\hat{w}-\frac{1}{s\lambda\xi}\frac{\partial^2 \eta}{\partial x_i\partial x_j}\hat{w}\right)e^{-s\alpha}.
\end{align}
Therefore, we have
\begin{equation*}
\begin{cases}
\Delta\tilde{w}=\left(\frac{4}{(s\xi)^2}|\nabla\eta|^2\hat{w}-\frac{2}{s^2\lambda\xi^2}\Delta\eta \hat{w}-\frac{2}{s\lambda\xi}\nabla\eta\cdot\nabla \hat{w}+\frac{1}{(s\lambda\xi)^2}\Delta \hat{w}\right)e^{-s\alpha}\\
+\left(\frac{3}{s\xi}|\nabla\eta|^2\hat{w}-\frac{4}{s^2\lambda\xi^2}\nabla \hat{w}\cdot\nabla\eta+|\nabla\eta|^2\hat{w}-\frac{1}{s\lambda\xi}\Delta\eta \hat{w}\right)e^{-s\alpha},\,\,\,\,\textit{in}\,\,\,\,Q,\\
\tilde{w}=0,\,\,\,\,\,\,\,\textit{on}\,\,\,\,\,\Sigma.
\end{cases}
\end{equation*}
From the regularity theory of second order elliptic equations, we conclude that
\begin{align}\label{3.20}
\int_Q\sum_{i,j=1}^n\left|\frac{\partial^2 \tilde{w}}{\partial x_i\partial x_j}\right|^2\,dxdt\leq C\int_Q\left(|\hat{w}|^2+\frac{1}{(s\lambda\xi)^2}|\nabla \hat{w}|^2\right)e^{-2s\alpha}\,dxdt.
\end{align}
Along with inequalities \eqref{3.8}, \eqref{3.17}, \eqref{3.19}-\eqref{3.20}, yields
\begin{align}\label{3.21}
\nonumber&\int_Q\frac{|\nabla^2 \hat{w}|^2}{(s\lambda\xi)^4}e^{-2s\alpha}\,dxdt=\int_Q\sum_{i,j=1}^n\frac{1}{(s\lambda\xi)^4}\left|\frac{\partial^2 \hat{w}}{\partial x_i\partial x_j}\right|^2e^{-2s\alpha}\,dxdt\\
\nonumber\leq &\int_Q\sum_{i,j=1}^n\left|\frac{\partial^2 \tilde{w}}{\partial x_i\partial x_j}\right|^2\,dxdt+C\int_Q\left(|\hat{w}|^2+\frac{1}{(s\lambda\xi)^2}|\nabla \hat{w}|^2\right)e^{-2s\alpha}\,dxdt\\
\nonumber\leq&C\int_Q\left(|\hat{w}|^2+\frac{1}{(s\lambda\xi)^2}|\nabla \hat{w}|^2\right)e^{-2s\alpha}\,dxdt\\
\leq &C\int_Qs^6\lambda^8\xi^6|z|^2e^{2s\alpha}\,dxdt
\end{align}
for any $\lambda\geq \hat{\lambda}$  and any $s\geq s_0(\lambda)(\sqrt{T}+T).$
\end{proof}
In what follows, we will prove the first main result of this paper.
\begin{theorem}\label{3.22}
Assume that $z_0\in L^2(\mathcal{O}),$ $g=g_0+\sum_{i=1}^n\frac{\partial g_i}{\partial x_i}$ with $g_i\in L^2(Q)$ for any $0\leq i\leq n,$ the functions $\alpha,$ $\xi$ be defined by \eqref{eq2.1}. Then there exists a constant $\hat{\lambda}>0$ such that for an arbitrary $\lambda\geq \hat{\lambda},$ we can choose $s_0=s_0(\lambda)>0$ satisfying: there exists a constant $C=C(\lambda)>0$ independent of $s,$ such that the solution $z\in L^2(Q)$ to problem \eqref{eq2.4} satisfies the following inequality:
\begin{align*}
&\int_Qe^{2s\alpha}\left(s^6\lambda^8\xi^6|z|^2+s^4\lambda^6\xi^4|\nabla z|^2+s^2\lambda^4\xi^2|\Delta z|^2+s^2\lambda^4\xi^2|\nabla^2z|^2\right)\,dxdt\\
\leq&C\int_Q\left(|g_0|^2+\sum_{i=1}^n(s\lambda\xi)^2|g_i|^2\right)e^{2s\alpha}\,dxdt+C\int_{Q_\omega}s^7\lambda^8\xi^7|z|^2e^{2s\alpha}\,dxdt
\end{align*}
for any $\lambda\geq \hat{\lambda}(1+\|D\|_{L^{\infty}(Q)}^{\frac{1}{2}}+\|a_1\|_{L^{\infty}(Q)}^{\frac{1}{2}})$  and any $s\geq s_0(\lambda)(\sqrt{T}+T).$ 
\end{theorem}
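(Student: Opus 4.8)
The plan is to prove the estimate by a duality (penalization) argument built on Proposition~\ref{3.4}, rather than by running a weighted multiplier identity directly on $z$, which is only an $L^2(Q)$ transposition solution. For the fixed $z\in L^2(Q)$ solving \eqref{eq2.4}, let $(\hat w,\hat u)$ be the minimizer of \eqref{3.1}--\eqref{3.3} furnished by Proposition~\ref{3.4}. Since $\hat w\in L^2(0,T;H^2(\mathcal{O})\cap H^1_0(\mathcal{O}))$ satisfies $L\hat w\in L^2(Q)$, $\hat w|_{\partial\mathcal{O}}=\Delta\hat w|_{\partial\mathcal{O}}=0$ and $\hat w(\cdot,0)=\hat w(\cdot,T)=0$, it is an admissible test function in Definition~\ref{de2.1}. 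First I would insert $w=\hat w$ into the transposition identity and, separately, compute $(z,L\hat w)_{L^2(Q)}$ from the first equation of \eqref{3.5}. Equating the two and recalling $L\hat w=s^6\lambda^8\xi^6e^{2s\alpha}z+\chi_\omega\hat u$ yields the master identity
\begin{align*}
\int_Q s^6\lambda^8\xi^6|z|^2e^{2s\alpha}\,dxdt=\int_Q\big(g\hat w-D:\nabla^2\hat w\,z-a_1z\Delta\hat w\big)\,dxdt-\int_{Q_\omega}z\hat u\,dxdt,
\end{align*}
which reduces the problem to estimating the four terms on the right by the single quantity $I:=\int_Q s^6\lambda^8\xi^6|z|^2e^{2s\alpha}\,dxdt$.

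Next I would bound the benign terms. Writing $g=g_0+\sum_i\partial_{x_i}g_i$ and integrating the divergence part by parts (boundary terms vanishing since $\hat w|_{\partial\mathcal O}=0$), the source contributes $\int_Q g_0\hat w\,dxdt-\sum_i\int_Q g_i\,\partial_{x_i}\hat w\,dxdt$. Splitting each factor symmetrically across $e^{\pm s\alpha}$ and applying Cauchy--Schwarz, the $g_0$-term is controlled by $(\int_Q|g_0|^2e^{2s\alpha})^{1/2}(\int_Q|\hat w|^2e^{-2s\alpha})^{1/2}$, the $g_i$-terms by $(\int_Q(s\lambda\xi)^2|g_i|^2e^{2s\alpha})^{1/2}(\int_Q(s\lambda\xi)^{-2}|\nabla\hat w|^2e^{-2s\alpha})^{1/2}$, and the observation term by $(\int_{Q_\omega}s^7\lambda^8\xi^7|z|^2e^{2s\alpha})^{1/2}(\int_{Q_\omega}(s^7\lambda^8\xi^7)^{-1}|\hat u|^2e^{-2s\alpha})^{1/2}$. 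In each case the second factor is exactly one of the quantities Proposition~\ref{3.4} bounds by $CI$, so all three are of the form $C(\text{data or local term})^{1/2}I^{1/2}$.

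The delicate point — and the reason for the hypothesis on $\lambda$ — is the treatment of the two terms carrying $D$ and $a_1$, since a naive Cauchy--Schwarz on $\int_Q D:\nabla^2\hat w\,z$ produces precisely $\|D\|_{L^\infty(Q)}I$, which cannot be absorbed. The remedy is to distribute the weights unevenly: I would pair $(s\lambda\xi)^{-2}|\nabla^2\hat w|e^{-s\alpha}$, whose square integrates to at most $CI$ by Proposition~\ref{3.4}, against $(s\lambda\xi)^2|z|e^{s\alpha}$, and likewise for the $a_1\Delta\hat w$ term. Using $\xi^{-1}\le T/2$ and $s\ge s_0(\lambda)(\sqrt T+T)$, the integrand $(s\lambda\xi)^4|z|^2e^{2s\alpha}=(s^2\lambda^4\xi^2)^{-1}\,s^6\lambda^8\xi^6|z|^2e^{2s\alpha}$ carries a gain of order $\lambda^{-4}$, so these two terms are bounded by $C\lambda^{-2}(\|D\|_{L^\infty(Q)}+\|a_1\|_{L^\infty(Q)})I$. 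Under $\lambda\ge\hat\lambda(1+\|D\|_{L^\infty(Q)}^{1/2}+\|a_1\|_{L^\infty(Q)}^{1/2})$ one has $\lambda^{-2}(\|D\|_{L^\infty(Q)}+\|a_1\|_{L^\infty(Q)})\le\hat\lambda^{-2}$, so enlarging $\hat\lambda$ lets this contribution be absorbed by $\tfrac12 I$ on the left. Collecting the three remaining bounds, dividing by $I^{1/2}$ and squaring then produces the zeroth-order estimate $I\le C\big(\int_Q|g_0|^2e^{2s\alpha}+\sum_i\int_Q(s\lambda\xi)^2|g_i|^2e^{2s\alpha}\big)+C\int_{Q_\omega}s^7\lambda^8\xi^7|z|^2e^{2s\alpha}$, namely the first left-hand term with the full right-hand side of the theorem.

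Finally, the gradient, Laplacian and Hessian terms must be recovered, and since they do not appear in the master identity I expect this to require a separate, complementary argument. The plan is to derive them from the established zeroth-order bound by a bootstrap of weighted energy identities: testing \eqref{eq2.4} against $z$ multiplied by the weights $s^a\lambda^b\xi^ce^{2s\alpha}$ whose exponents match the left-hand side, integrating by parts in $x$, and using Young's inequality to trade each derivative order against the next — for instance controlling $\int_Q s^4\lambda^6\xi^4e^{2s\alpha}|\nabla z|^2$ by a small multiple of $\int_Q s^2\lambda^4\xi^2e^{2s\alpha}|\Delta z|^2$ plus $CI$, with the top order closed through the equation. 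The coefficient terms $D:\nabla^2z$ and $\Delta(a_1z)$ reappear here and are again absorbed via the $\lambda$-gain, which is why the same hypothesis on $\lambda$ governs the full statement. Because $z$ is only a transposition solution, I would justify these manipulations first on solutions arising from smooth, compatible data and then pass to the general case by density, the weighted a priori bound surviving the limit.
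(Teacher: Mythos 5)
Your proposal is correct and follows essentially the same route as the paper: the zeroth-order bound is obtained from the transposition identity tested against the minimizer of Proposition~\ref{3.4}, with the same uneven weight-splitting and $\lambda$-absorption of the $D$ and $a_1$ terms, and the derivative terms are then recovered by weighted energy identities on \eqref{eq2.4} plus interpolation. The only detail you leave implicit that the paper makes explicit is the use of second-order elliptic regularity applied to $s\lambda^2\xi z e^{s\alpha}$ to pass from $|\Delta z|^2$ to the full Hessian $|\nabla^2 z|^2$.
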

\begin{proof}
Let $(\hat{w},\hat{u})$ be the solution of the extremal problem \eqref{3.1}-\eqref{3.3}
obtained in Proposition \ref{3.4}. Then by definition of weak solution, we have
\begin{align*}
0=&\int_Q \left(zL\hat{w}-g\hat{w}+D:\nabla^2\hat{w} z+a_1z\Delta \hat{w}\right)\,dxdt\\
=&\int_Q\left(s^6\lambda^8\xi^6e^{2s\alpha}|z|^2+\chi_\omega uz-g\hat{w}+D:\nabla^2\hat{w} z+a_1z\Delta \hat{w}\right).
\end{align*}
Therefore, we conclude from H\"{o}lder's inequality and Proposition \ref{3.4} that
\begin{align*}
&\int_Qs^6\lambda^8\xi^6e^{2s\alpha}|z|^2\,dxdt=\int_Q\left(g\hat{w}-D:\nabla^2w z-a_1z\Delta w\right)\,dxdt-\int_{Q_\omega} \hat{u}z\,dxdt\\
=&\int_Q\left(g_0\hat{w}-D:\nabla^2w z-a_1z\Delta w-\sum_{i=1}^ng_i\frac{\partial\hat{w}}{\partial x_i}\right)\,dxdt-\int_{Q_\omega} \hat{u}z\,dxdt\\
\leq&\|g_0e^{s\alpha}\|_{L^2(Q)}\|\hat{w}e^{-s\alpha}\|_{L^2(Q)}+\sum_{i=1}^n\|(s\lambda\xi)g_ie^{s\alpha}\|_{L^2(Q)}\|\frac{\hat{w}_{x_i}}{s\lambda\xi}e^{-s\alpha}\|_{L^2(Q)}\\
&+\|D\|_{L^{\infty}(Q)}\left\|\frac{|\nabla^2\hat{w}|}{(s\lambda\xi)^2}e^{-s\alpha}\right\|_{L^2(Q)}\|(s\lambda\xi)^2ze^{s\alpha}\|_{L^2(Q)}+\|a_1\|_{L^{\infty}(Q)}\left\|\frac{|\Delta\hat{w}|}{(s\lambda\xi)^2}e^{-s\alpha}\right\|_{L^2(Q)}\|(s\lambda\xi)^2ze^{s\alpha}\|_{L^2(Q)}\\
&+\left(\int_{Q_\omega}\frac{|\hat{u}|^2}{s^7\lambda^8\xi^7}e^{-2s\alpha}\,dxdt\right)^{\frac{1}{2}}
\left(\int_{Q_\omega}s^7\lambda^8\xi^7|z|^2e^{2s\alpha}\,dxdt\right)^{\frac{1}{2}}\\
\leq&C\|g_0e^{s\alpha}\|_{L^2(Q)}^2+C\sum_{i=1}^n\|(s\lambda\xi)g_ie^{s\alpha}\|_{L^2(Q)}^2+
C\int_{Q_\omega}s^7\lambda^8\xi^7|z|^2e^{2s\alpha}\,dxdt\\
&+\frac{1}{2}\int_Qs^6\lambda^8\xi^6e^{2s\alpha}|z|^2\,dxdt,
\end{align*}
for any $\lambda\geq \hat{\lambda}(\lambda)(1+\|D\|_{L^{\infty}(Q)}^{\frac{1}{2}}+\|a_1\|_{L^{\infty}(Q)}^{\frac{1}{2}})$  and any $s\geq s_0(\sqrt{T}+T),$ which implies that
\begin{align}\label{3.23}
\nonumber\int_Qs^6\lambda^8\xi^6e^{2s\alpha}|z|^2\,dxdt\leq&C\int_Q\left(|g_0|^2+\sum_{i=1}^n(s\lambda\xi)^2|g_i|^2\right)e^{2s\alpha}\,dxdt\\
&+C\int_{Q_\omega}s^7\lambda^8\xi^7|z|^2e^{2s\alpha}\,dxdt
\end{align}
for any $\lambda\geq \hat{\lambda}(1+\|D\|_{L^{\infty}(Q)}^{\frac{1}{2}}+\|a_1\|_{L^{\infty}(Q)}^{\frac{1}{2}})$  and any $s\geq s_0(\lambda)(\sqrt{T}+T).$

Taking the inner product of the first equation of problem \eqref{eq2.4} with $s^2\lambda^4\xi^2ze^{2s\alpha}$ in $L^2(Q),$ we obtain
\begin{align}\label{3.24}
\int_Q\left(-\frac{\partial z}{\partial t}+\Delta^2z+\sum_{i,j=1}^n\frac{\partial^2 (D_{ij}z)}{\partial x_i\partial x_j}+\Delta (a_1z)-g\right)s^2\lambda^4\xi^2ze^{2s\alpha}\,dxdt=0.
\end{align}
In what follows, we will estimate each term of the right hand side of equality \eqref{3.24} by using H\"{o}lder's inequality, Young's inequality, inequality \eqref{3.23} along with the properties of the weight function \eqref{eq2.2}.
\begin{align}\label{3.25}
\nonumber\left|-\int_Q\frac{\partial z}{\partial t}s^2\lambda^4\xi^2ze^{2s\alpha}\,dxdt\right|=&\frac{1}{2}\left|\int_Q2s^2\lambda^4\xi\xi_t|z|^2e^{2s\alpha}+2s^3\lambda^4\xi^2\alpha_t|z|^2e^{2s\alpha}\,dxdt\right|\\
\leq&C\int_Qs^6\lambda^8\xi^6e^{2s\alpha}|z|^2\,dxdt.
\end{align}
\begin{align*}
&\left|-\int_Qgs^2\lambda^4\xi^2ze^{2s\alpha}\,dxdt\right|\leq\|g_0e^{s\alpha}\|_{L^2(Q)}\left(\int_Qs^4\lambda^8\xi^4e^{2s\alpha}|z|^2\,dxdt\right)^{\frac{1}{2}}+\sum_{i=1}^n\left|\int_Qg_is^2\lambda^4\xi^2z_{x_i}e^{2s\alpha}\,dxdt\right|\\
&+2\sum_{i=1}^n\left|\int_Qg_is^2\lambda^4\xi\xi_{x_i}ze^{2s\alpha}\,dxdt\right|+2\sum_{i=1}^n\left|\int_Qg_is^3\lambda^4\xi^2\alpha_{x_i}ze^{2s\alpha}\,dxdt\right|\\
\leq&\|g_0e^{s\alpha}\|_{L^2(Q)}\left(\int_Qs^6\lambda^8\xi^6e^{2s\alpha}|z|^2\,dxdt\right)^{\frac{1}{2}}+\left(\sum_{i=1}^n\int_Q(s\lambda\xi)^2|g_i|^2e^{2s\alpha}\,dxdt\right)^{\frac{1}{2}}\left(\int_Qs^2\lambda^6\xi^2|\nabla z|^2e^{2s\alpha}\,dxdt\right)^{\frac{1}{2}}\\
&+C\left(\sum_{i=1}^n\int_Q(s\lambda\xi)^2|g_i|^2e^{2s\alpha}\,dxdt\right)^{\frac{1}{2}}\left(\int_Qs^6\lambda^8\xi^6|z|^2e^{2s\alpha}\,dxdt\right)^{\frac{1}{2}},
\end{align*}
which implies that
\begin{align}\label{3.26}
\nonumber\left|-\int_Qgs^2\lambda^4\xi^2ze^{2s\alpha}\,dxdt\right|\leq &C\int_Q\left(|g_0|^2+\sum_{i=1}^n(s\lambda\xi)^2|g_i|^2\right)e^{2s\alpha}\,dxdt+C\int_Qs^6\lambda^8\xi^6e^{2s\alpha}|z|^2\,dxdt\\
&+C\int_Qs^2\lambda^6\xi^2|\nabla z|^2e^{2s\alpha}\,dxdt.
\end{align}

\begin{align*}
&\int_Q\Delta^2zs^2\lambda^4\xi^2ze^{2s\alpha}\,dxdt=-\int_Q\nabla\Delta z\cdot\left(s^2\lambda^4\xi^2\nabla z +2s^2\lambda^5\xi^2\nabla\eta z +2s^3\lambda^5\xi^3\nabla\eta z \right)e^{2s\alpha}\,dxdt\\
=&\int_Qs^2\lambda^4\xi^2|\Delta z|^2 e^{2s\alpha}\,dxdt+\int_Q\Delta z\left(4s^2\lambda^5\xi^2\nabla\eta\cdot\nabla z+4s^3\lambda^5\xi^3\nabla\eta\cdot\nabla z\right)e^{2s\alpha}\,dxdt\\
&+\int_Q\Delta z\left(4s^2\lambda^6\xi^2|\nabla\eta|^2 z+2s^2\lambda^5\xi^2\Delta\eta z+10s^3\lambda^6\xi^3|\nabla\eta|^2 z\right)e^{2s\alpha}\,dxdt\\
&+\int_Q\Delta z\left(2s^3\lambda^5\xi^3\Delta\eta z+4s^4\lambda^6\xi^4|\nabla\eta|^2 z\right)e^{2s\alpha}\,dxdt,
\end{align*}
which entails that
\begin{align}\label{3.27}
\nonumber\int_Qs^2\lambda^4\xi^2|\Delta z|^2 e^{2s\alpha}\,dxdt\leq&\int_Q\Delta^2zs^2\lambda^4\xi^2ze^{2s\alpha}\,dxdt+C\int_Qs^4\lambda^6\xi^4|\nabla z|^2e^{2s\alpha}\,dxdt\\
&+C\int_Qs^6\lambda^8\xi^6|z|^2e^{2s\alpha}\,dxdt.
\end{align}
By integrating by parts, we obtain
\begin{align}\label{3.28}
\nonumber&\int_Q\left(\sum_{i,j=1}^n\frac{\partial^2 (D_{ij}z)}{\partial x_i\partial x_j}+\Delta (a_1z)\right)s^2\lambda^4\xi^2ze^{2s\alpha}\,dxdt\\
=&\int_Q\left(\sum_{i,j=1}^n\frac{\partial^2 (s^2\lambda^4\xi^2ze^{2s\alpha})}{\partial x_i\partial x_j}D_{ij}z+a_1z\Delta (s^2\lambda^4\xi^2ze^{2s\alpha})\right)\,dxdt.
\end{align}
Thanks to
\begin{align*}
\frac{\partial}{\partial x_i}\left(s^2\lambda^4\xi^2ze^{2s\alpha}\right)=\left(s^2\lambda^4\xi^2\frac{\partial z}{\partial x_i}+2s^2\lambda^5\xi^2\frac{\partial \eta}{\partial x_i}z+2s^3\lambda^5\xi^3\frac{\partial \eta}{\partial x_i}z\right)e^{2s\alpha}
\end{align*}
and
\begin{align}\label{3.29}
\nonumber&\frac{\partial^2}{\partial x_i\partial x_j}\left(s^2\lambda^4\xi^2ze^{2s\alpha}\right)=\left(s^2\lambda^4\xi^2\frac{\partial^2 z}{\partial x_i\partial x_j}+2s^2\lambda^5\xi^2\frac{\partial \eta}{\partial x_i}\frac{\partial z}{\partial x_j}+2s^2\lambda^5\xi^2\frac{\partial \eta}{\partial x_j}\frac{\partial z}{\partial x_i}\right)e^{2s\alpha}\\
\nonumber&+\left(2s^3\lambda^5\xi^3\frac{\partial \eta}{\partial x_i}\frac{\partial z}{\partial x_j}+2s^3\lambda^5\xi^3\frac{\partial \eta}{\partial x_j}\frac{\partial z}{\partial x_i}+2s^3\lambda^5\xi^3\frac{\partial^2 \eta}{\partial x_i\partial x_j}z+2s^2\lambda^5\xi^2\frac{\partial^2 \eta}{\partial x_i\partial x_j}z\right)e^{2s\alpha}\\
&+\left(4s^2\lambda^6\xi^2\frac{\partial \eta}{\partial x_i}\frac{\partial \eta}{\partial x_j}z+4s^4\lambda^6\xi^4\frac{\partial \eta}{\partial x_j}\frac{\partial \eta}{\partial x_i}z+10s^3\lambda^6\xi^3\frac{\partial \eta}{\partial x_i}\frac{\partial\eta}{\partial x_j}z\right)e^{2s\alpha}, 
\end{align}
we conclude from inequalities \eqref{3.28}-\eqref{3.29} that
\begin{align}\label{3.30}
\nonumber&\left|\int_Q\left(\sum_{i,j=1}^n\frac{\partial^2 (D_{ij}z)}{\partial x_i\partial x_j}+\Delta (a_1z)\right)s^2\lambda^4\xi^2ze^{2s\alpha}\,dxdt\right|\\
\nonumber\leq&C\int_Q\left(s^2\lambda^4\xi^2|\nabla^2 z|^2+s^4\lambda^6\xi^4|\nabla z|^2+s^6\lambda^8\xi^6|z|^2\right)e^{2s\alpha}\,dxdt\\
&+\frac{1}{2}\int_Qs^2\lambda^4\xi^2|\Delta z|^2 e^{2s\alpha}\,dxdt
\end{align}
for any $\lambda\geq \hat{\lambda}(1+\|D\|_{L^{\infty}(Q)}^{\frac{1}{2}}+\|a_1\|_{L^{\infty}(Q)}^{\frac{1}{2}})$  and any $s\geq s_0(\lambda)(\sqrt{T}+T).$

It follows from inequalities \eqref{3.23}-\eqref{3.27}, \eqref{3.30} that 
\begin{align}\label{3.31}
\nonumber\int_Qs^2\lambda^4\xi^2|\Delta z|^2 e^{2s\alpha}\,dxdt\leq&C\int_Q\left(|g_0|^2+\sum_{i=1}^n(s\lambda\xi)^2|g_i|^2\right)e^{2s\alpha}\,dxdt+C\int_{Q_\omega}s^7\lambda^8\xi^7e^{2s\alpha}|z|^2\,dxdt\\
&+C\int_Q\left(s^2\lambda^4\xi^2|\nabla^2 z|^2+s^4\lambda^6\xi^4|\nabla z|^2\right)e^{2s\alpha}\,dxdt.
\end{align}
Denote by $\tilde{z}=s\lambda^2\xi ze^{s\alpha},$ then we have
\begin{align*}
\frac{\partial\tilde{z}}{\partial x_i}=\left(s\lambda^3\xi\frac{\partial\eta}{\partial x_i}z+s\lambda^2\xi\frac{\partial z}{\partial x_i}+s^2\lambda^3\xi^2\frac{\partial\eta}{\partial x_i}z\right)e^{s\alpha}
\end{align*}
and
\begin{align}\label{3.32}
\nonumber\frac{\partial^2\tilde{z}}{\partial x_i\partial x_j}=&\left(s\lambda^4\xi\frac{\partial\eta}{\partial x_i}\frac{\partial \eta}{\partial x_j}z+s\lambda^3\xi\frac{\partial^2\eta}{\partial x_i\partial x_j}z+s\lambda^3\xi\frac{\partial\eta}{\partial x_i}\frac{\partial z}{\partial x_j}+s\lambda^3\xi\frac{\partial\eta}{\partial x_j}\frac{\partial z}{\partial x_i}\right)e^{s\alpha}\\
\nonumber&+\left(s\lambda^2\xi\frac{\partial^2 z}{\partial x_i\partial x_j}+3s^2\lambda^4\xi^2\frac{\partial\eta}{\partial x_i}\frac{\partial \eta}{\partial x_j}z+s^2\lambda^3\xi^2\frac{\partial\eta}{\partial x_i}\frac{\partial z}{\partial x_j}+s^2\lambda^3\xi^2\frac{\partial z}{\partial x_i}\frac{\partial \eta}{\partial x_j}\right)e^{s\alpha}\\
&+\left(s^3\lambda^4\xi^3\frac{\partial \eta}{\partial x_i}\frac{\partial \eta}{\partial x_j}z+s^2\lambda^3\xi^2\frac{\partial^2 \eta}{\partial x_i\partial x_j}z\right)e^{s\alpha}.
\end{align}
Therefore, we have
\begin{equation*}
\begin{cases}
\Delta\tilde{z}=\left(s\lambda^4\xi|\nabla\eta|^2z+s\lambda^3\xi\Delta\eta z+2s\lambda^3\xi\nabla\eta\cdot\nabla z+s\lambda^2\xi\Delta z\right)e^{s\alpha}\\
+\left(3s^2\lambda^4\xi^2|\nabla\eta|^2z+2s^2\lambda^3\xi^2\nabla z\cdot\nabla\eta+s^3\lambda^4\xi^3|\nabla\eta|^2z+s^2\lambda^3\xi^2\Delta\eta z\right)e^{s\alpha},\,\,\,\,\textit{in}\,\,\,\,Q,\\
\tilde{z}=0,\,\,\,\,\,\,\,\textit{on}\,\,\,\,\,\Sigma.
\end{cases}
\end{equation*}
We conclude from the regularity theory of second order elliptic equations that
\begin{align}\label{3.33}
\int_Q\sum_{i,j=1}^n\left|\frac{\partial^2 \tilde{z}}{\partial x_i\partial x_j}\right|^2\,dxdt\leq C\int_Q\left(s^6\lambda^8\xi^6|z|^2+s^4\lambda^6\xi^4|\nabla z|^2\right)e^{2s\alpha}\,dxdt.
\end{align}
It follows from inequalities \eqref{3.32}-\eqref{3.33} that
\begin{align}\label{3.34}
\nonumber&\int_Qs^2\lambda^4\xi^2|\nabla^2 z|^2e^{2s\alpha}\,dxdt=\int_Q\sum_{i,j=1}^ns^2\lambda^4\xi^2\left|\frac{\partial^2 z}{\partial x_i\partial x_j}\right|^2e^{2s\alpha}\,dxdt\\
\nonumber\leq &\int_Q\sum_{i,j=1}^n\left|\frac{\partial^2 \tilde{z}}{\partial x_i\partial x_j}\right|^2\,dxdt+C\int_Q\left(s^6\lambda^8\xi^6|z|^2+s^4\lambda^6\xi^4|\nabla z|^2\right)e^{2s\alpha}\,dxdt\\
\leq&C\int_Q\left(s^6\lambda^8\xi^6|z|^2+s^4\lambda^6\xi^4|\nabla z|^2\right)e^{2s\alpha}\,dxdt.
\end{align}
Thus, we infer from inequalities \eqref{3.23}, \eqref{3.31}, \eqref{3.34} that there exists a generic positive constant $\beta_1,$ such that
\begin{align}\label{3.35}
\nonumber&\int_Qs^2\lambda^4\xi^2|\Delta z|^2 e^{2s\alpha}\,dxdt+\int_Qs^2\lambda^4\xi^2|\nabla^2 z|^2e^{2s\alpha}\,dxdt+\int_Qs^4\lambda^6\xi^4|\nabla z|^2e^{2s\alpha}\,dxdt\\
\nonumber\leq&C\int_Q\left(|g_0|^2+\sum_{i=1}^n(s\lambda\xi)^2|g_i|^2\right)e^{2s\alpha}\,dxdt+C\int_{Q_\omega}s^7\lambda^8\xi^7e^{2s\alpha}|z|^2\,dxdt\\
&+\beta_1\int_Qs^4\lambda^6\xi^4|\nabla z|^2e^{2s\alpha}\,dxdt
\end{align}
for any $\lambda\geq \hat{\lambda}(1+\|D\|_{L^{\infty}(Q)}^{\frac{1}{2}}+\|a_1\|_{L^{\infty}(Q)}^{\frac{1}{2}})$  and any $s\geq s_0(\lambda)(\sqrt{T}+T).$ 

Thanks to
\begin{align*}
\int_Qs^4\lambda^6\xi^4|\nabla z|^2e^{2s\alpha}\,dxdt=&-\int_Qs^4\lambda^6\xi^4z\Delta ze^{2s\alpha}\,dxdt-4\int_Qs^4\lambda^7\xi^4z\nabla z\cdot\nabla\eta e^{2s\alpha}\,dxdt\\
&-4\int_Qs^5\lambda^7\xi^5z\nabla z\cdot\nabla\eta e^{2s\alpha}\,dxdt\\
\leq&\frac{1}{4\beta_1}\int_Qs^2\lambda^4\xi^2|\Delta z|^2 e^{2s\alpha}\,dxdt+C\int_Qs^6\lambda^8\xi^6|z|^2 e^{2s\alpha}\,dxdt\\
&+\frac{1}{2}\int_Qs^4\lambda^6\xi^4|\nabla z|^2e^{2s\alpha}\,dxdt,
\end{align*}
we obtain
\begin{align}\label{3.36}
\int_Qs^4\lambda^6\xi^4|\nabla z|^2e^{2s\alpha}\,dxdt
\leq\frac{1}{2\beta_1}\int_Qs^2\lambda^4\xi^2|\Delta z|^2 e^{2s\alpha}\,dxdt+C\int_Qs^6\lambda^8\xi^6|z|^2 e^{2s\alpha}\,dxdt.
\end{align}

Therefore, we infer from inequalities \eqref{3.23}, \eqref{3.35}-\eqref{3.36} that
\begin{align}\label{3.37}
\nonumber&\int_Qs^2\lambda^4\xi^2|\Delta z|^2 e^{2s\alpha}\,dxdt+\int_Qs^2\lambda^4\xi^2|\nabla^2 z|^2e^{2s\alpha}\,dxdt+\int_Qs^4\lambda^6\xi^4|\nabla z|^2e^{2s\alpha}\,dxdt\\
\leq&C\int_Q\left(|g_0|^2+\sum_{i=1}^n(s\lambda\xi)^2|g_i|^2\right)e^{2s\alpha}\,dxdt+C\int_{Q_\omega}s^7\lambda^8\xi^7e^{2s\alpha}|z|^2\,dxdt.
\end{align}
for any $\lambda\geq \hat{\lambda}(1+\|D\|_{L^{\infty}(Q)}^{\frac{1}{2}}+\|a_1\|_{L^{\infty}(Q)}^{\frac{1}{2}})$  and any $s\geq s_0(\lambda)(\sqrt{T}+T).$ 
\end{proof}

\section{\bf Null controllability of fourth order semilinear parabolic equations}
\def\theequation{4.\arabic{equation}}\makeatother
\setcounter{equation}{0}
In this section, we mainly consider the null controllability for fourth order semilinear parabolic equations. To begin with, based on the optimal control theory and global Carleman estimates obtained in Theorem \ref{3.22}, we consider the null controllability of the fourth order linear parabolic equations:
\begin{equation}\label{eq4.1}
\begin{cases}
Gy=\frac{\partial y}{\partial t}+\Delta^2 y+a_0y+B_0\cdot\nabla y+D:\nabla^2y+a_1\Delta y=\chi_\omega v+g,\,\,\,\,\textit{in}\,\,\,Q,\\
y=\Delta y=0,\,\,\,\,\,\textit{on}\,\,\,\,\Sigma,\\
y(x,0)=y_0(x),\,\,\,\,\textit{in}\,\,\,\,\mathcal{O}.
\end{cases}
\end{equation}
%Here, the functions $a_0,$ $a_1\in L^{\infty}(Q;\mathbb{R}),$ $B_0=(B_{01},B_{02},\cdots,B_{0n})\in L^{\infty}(Q;\mathbb{R}^n),$ $D=(D_{ij})_{n\times n}\in L^{\infty}(Q;\mathbb{R}^{n^2}),$ the initial data $y_0$ and the external force $g$ are given, $\omega$ be an open subset of $\mathcal{O}$ and $\chi_\omega$ is the characteristic function of $\omega,$ $v(x,t)$ is a locally distributed control in the space
%\begin{align}\label{4.2}
%\mathcal{U}=\{v: v\in L^2(Q_\omega)\}.
%\end{align}
The null controllability of problem \eqref{eq4.1} is that for any time $T>0,$ we would like to find a control $v\in L^2(Q_\omega),$ such that
\begin{align}\label{eq4.2}
y(x,T)=0,\,\,\,\,\,\,x\in\mathcal{O}.
\end{align}
Now, to study the null controllability of problem \eqref{eq4.1}, we introduce some new weight functions
\begin{equation}\label{eq4.3}
\tilde{\alpha}(x,t)=
\begin{cases}
\alpha(x,\frac{T}{2}),\,\,\,\,\,\forall\,\,\,\,t\in[0,\frac{T}{2}],\\
\alpha(x,t),\,\,\,\,\,\forall\,\,\,\,t\in[\frac{T}{2},T]
\end{cases}
\end{equation}
and
\begin{equation}\label{eq4.4}
\tilde{\xi}(x,t)=
\begin{cases}
\xi(x,\frac{T}{2}),\,\,\,\,\,\forall\,\,\,\,t\in[0,\frac{T}{2}],\\
\xi(x,t),\,\,\,\,\,\forall\,\,\,\,t\in[\frac{T}{2},T].
\end{cases}
\end{equation}
Obviously, $\alpha(x,t)\leq \tilde{\alpha}(x,t)$ and $\xi(x,t)\geq \tilde{\xi}(x,t)$ for any $(x,t)\in\mathcal{O}\times [0,\frac{T}{2}].$

%Henceforth, we define a weighted $L^2$-space with a weight function $\kappa(x,t)>0$ for almost all $(x,t)\in Q:$
%\begin{align*}
%L^2(Q,\kappa)=\{y|\int_Q|y(x,t)|^2\kappa(x,t)\,dxdt<+\infty\}
%\end{align*}
%with the norm
%\begin{align*}
%\|y\|_{L^2(Q,\kappa)}=\left(\int_Q|y(x,t)|^2\kappa(x,t)\,dxdt\right)^{\frac{1}{2}}.
%\end{align*}
%
%
%
%In order to formulate our results, we are in position to introduce the function space

In what follows, we will prove the null controllability for problem \eqref{eq4.1}.
\begin{theorem}\label{th4.1}
Let the functions $\tilde{\xi},$ $\tilde{\alpha}$ be defined by \eqref{eq4.3} and \eqref{eq4.4}, respectively. If $\tilde{\xi}^{-3}e^{-s\tilde{\alpha}}g\in L^2(Q)$ and $y_0\in L^2(\mathcal{O}).$ Then there exists a control $v\in L^2(Q_\omega)$ such that the solution of problem \eqref{eq4.1} satisfies \eqref{eq4.2}. Moreover we can obtain a control $v$ of minimal norm in $L^2(Q_\omega)$ among the admissible controls (such that \eqref{eq4.2} is satisfied).
\end{theorem}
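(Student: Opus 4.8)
The plan is to exploit the duality between null controllability and observability, manufacturing the observability inequality for the adjoint of \eqref{eq4.1} out of the Carleman estimate of Theorem \ref{3.22}. First I would write down the formal adjoint of the operator $G$. A direct integration by parts gives
$$G^*\varphi=-\frac{\partial\varphi}{\partial t}+\Delta^2\varphi+a_0\varphi-\mathrm{div}(B_0\varphi)+\sum_{i,j=1}^n\frac{\partial^2(D_{ij}\varphi)}{\partial x_i\partial x_j}+\Delta(a_1\varphi),$$
so that $G^*\varphi=P^*\varphi+a_0\varphi-\mathrm{div}(B_0\varphi)$ with $P^*$ the operator of \eqref{eq2.4}. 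Hence any backward solution of $G^*\varphi=0$ solves $P^*\varphi=g_0+\sum_{i=1}^n\partial_{x_i}g_i$ with $g_0=-a_0\varphi$ and $g_i=B_{0i}\varphi$ (the term $\mathrm{div}(B_0\varphi)$ being read distributionally as $\sum_i\partial_{x_i}(B_{0i}\varphi)$), which is precisely the low-regularity right-hand side admitted by Theorem \ref{3.22}. Applying that theorem and using $|g_0|^2\leq\|a_0\|_{L^\infty(Q)}^2|\varphi|^2$ together with $\sum_i(s\lambda\xi)^2|g_i|^2\leq(s\lambda\xi)^2\|B_0\|_{L^\infty(Q)}^2|\varphi|^2$, I would absorb these two contributions into the dominant left-hand term $\iint_Q s^6\lambda^8\xi^6|\varphi|^2e^{2s\alpha}\,dxdt$: since $\xi$ is bounded below and $s^6\lambda^8\xi^6$ dominates both $(s\lambda\xi)^2$ and a constant once $s,\lambda$ are large, the absorption only forces a further enlargement of $s_0$ and $\hat\lambda$ (now also depending on $\|a_0\|_{L^\infty(Q)}$ and $\|B_0\|_{L^\infty(Q)}$), yielding the Carleman estimate for the genuine adjoint.

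Next I would turn this into an observability inequality written through the time-frozen weights $\tilde\alpha,\tilde\xi$. On $[T/2,T]$ the frozen weights coincide with $\alpha,\xi$, so the Carleman estimate already controls $\varphi$ there. On $[0,T/2]$, where $e^{2s\alpha}$ degenerates as $t\to0$, I would instead run a classical energy estimate for the backward fourth-order equation $G^*\varphi=0$: multiplying by $\varphi$, integrating over $\mathcal{O}\times[t,T/2]$, and using the $H^2$-coercivity of $\Delta^2$ together with Gronwall's inequality (the lower-order coefficients being in $L^\infty$) bounds $\sup_{t\in[0,T/2]}\|\varphi(t)\|_{L^2(\mathcal{O})}$, and in particular $\|\varphi(0)\|_{L^2(\mathcal{O})}$, by $\|\varphi(T/2)\|_{L^2(\mathcal{O})}$, which the Carleman estimate controls on a subinterval of $[T/2,T]$ where the weight is nondegenerate. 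Because $\tilde\alpha=\alpha(\cdot,T/2)$ and $\tilde\xi=\xi(\cdot,T/2)$ are time-independent on $[0,T/2]$, the left-hand integrals may be rewritten with $\tilde\alpha,\tilde\xi$ at no cost. Combining the two regimes produces an observability inequality of the form
$$\|\varphi(0)\|_{L^2(\mathcal{O})}^2+\iint_Q s^6\lambda^8\tilde\xi^{6}e^{2s\tilde\alpha}|\varphi|^2\,dxdt\leq C\iint_{Q_\omega}s^7\lambda^8\tilde\xi^{7}e^{2s\tilde\alpha}|\varphi|^2\,dxdt.$$

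Finally I would construct the control by the Fursikov--Imanuvilov variational method. On the space of adjoint states $\varphi$ (solutions of $G^*\varphi=0$) I would introduce the bilinear form $\mathcal{B}(\varphi,\psi)=\iint_{Q_\omega}s^7\lambda^8\tilde\xi^{7}e^{2s\tilde\alpha}\varphi\psi\,dxdt$, coercive for the norm controlling $\|\varphi(0)\|_{L^2(\mathcal{O})}$ and the global weighted $L^2$-norm of $\varphi$ by the observability inequality, and the linear form $\mathcal{L}(\varphi)=(y_0,\varphi(0))_{L^2(\mathcal{O})}+\iint_Q g\varphi\,dxdt$. The hypothesis $\tilde\xi^{-3}e^{-s\tilde\alpha}g\in L^2(Q)$ is exactly what makes $\mathcal{L}$ continuous for this norm, since $|\iint_Q g\varphi\,dxdt|\leq\|\tilde\xi^{-3}e^{-s\tilde\alpha}g\|_{L^2(Q)}\|\tilde\xi^{3}e^{s\tilde\alpha}\varphi\|_{L^2(Q)}$ and the last factor is dominated by the observability right-hand side. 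Lax--Milgram (on the completion under the observability norm) then yields a unique minimizer $\hat\varphi$ of $\tfrac12\mathcal{B}(\varphi,\varphi)+\mathcal{L}(\varphi)$; setting $v=s^7\lambda^8\tilde\xi^{7}e^{2s\tilde\alpha}\hat\varphi|_{Q_\omega}$ and comparing the Euler--Lagrange identity with the duality relation $\iint_{Q_\omega}v\psi=-(y_0,\psi(0))-\iint_Q g\psi$ shows $v\in L^2(Q_\omega)$ steers the solution of \eqref{eq4.1} to $y(\cdot,T)=0$. Once the admissible set of $L^2(Q_\omega)$-controls is shown nonempty in this way, the minimal-norm control is obtained as the projection of the origin onto this closed affine subspace, equivalently as the $\varepsilon\to0$ limit of the penalized problems $\tfrac12\|v\|_{L^2(Q_\omega)}^2+\tfrac1{2\varepsilon}\|y_v(\cdot,T)\|_{L^2(\mathcal{O})}^2$, whose minimizers are uniformly bounded by observability.

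The step I expect to be the main obstacle is the second one: transferring the Carleman estimate, which collapses at $t=0$, into a genuine observability inequality that controls $\|\varphi(0)\|_{L^2(\mathcal{O})}$. This forces the switch to the frozen weights $\tilde\alpha,\tilde\xi$ and a careful energy estimate for the fourth-order backward equation, in which the first- and second-order perturbations $B_0\cdot\nabla$, $D:\nabla^2$ and $a_1\Delta$ must be absorbed using the $H^2$-dissipation of $\Delta^2$ rather than lost; matching the weight powers on the two sides, so that the source weight $\tilde\xi^{-3}e^{-s\tilde\alpha}$ is genuinely dual to the observation weight, is the delicate bookkeeping.
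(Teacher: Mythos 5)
Your proposal follows essentially the same route as the paper: the adjoint of $G$ is split as $P^*\varphi$ plus the source $-a_0\varphi-\mathrm{div}(B_0\varphi)$ so that Theorem \ref{3.22} applies with $g_0=-a_0\varphi$, $g_i=B_{0i}\varphi$ absorbed for large $\lambda$, and the degenerate weight at $t=0$ is repaired by a backward energy estimate with Gronwall on $[0,T/2]$ together with the frozen weights $\tilde\alpha,\tilde\xi$, yielding the same observability inequality. The only difference is the final packaging — you build the control by Lax--Milgram on the completion under the observability norm and then recover the minimal-norm control by projection, whereas the paper works throughout with the penalized functionals $\mathcal{J}_\epsilon$ and passes to the limit $\epsilon\to 0$ — but these are standard equivalent implementations of the same duality argument.
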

\begin{proof}
Let $\epsilon$ be a strictly positive real number and define
\begin{align}\label{eq4.5}
\mathcal{J}_\epsilon(y,v)=\frac{1}{2\epsilon}\int_{\mathcal{O}}|y(x,T)|^2\,dx+\frac{1}{2}\int_{Q_\omega}|v(x,t)|^2\,dxdt.
\end{align}
Now, we introduce the following extremal problem
\begin{align}\label{eq4.6}
\min_{(y,v)\in\mathcal{V}}\mathcal{J}_\epsilon(y,v),
\end{align}
where $\mathcal{V}$ is the totality of $(y,v)\in Y(Q)\times L^2(Q_\omega)$ satisfying 
\begin{equation}\label{eq4.7}
\begin{cases}
Gy=\frac{\partial y}{\partial t}+\Delta^2 y+a_0y+B_0\cdot\nabla y+D:\nabla^2y+a_1\Delta y=\chi_\omega v+g,\,\,\,\,\textit{in}\,\,\,Q,\\
y=\Delta y=0,\,\,\,\,\,\textit{on}\,\,\,\,\Sigma,\\
y(x,0)=y_0(x),\,\,\,\,\,\,\textit{in}\,\,\,\,\mathcal{O},
\end{cases}
\end{equation}
where
\begin{align*}
Y(Q)=\{y: G(y)\in L^2(Q),y|_{\Sigma}=\Delta y|_{\Sigma}=0, y(0)\in L^2(\mathcal{O})\}
\end{align*}
equipped with the norm
\begin{align*}
\|y\|_{Y(Q)}^2=\|G(y)\|_{L^2(Q)}^2+\|y(0)\|_{L^2(\mathcal{O})}^2.
\end{align*}
By the standard arguments of optimal control of partial differential equations (see \cite{avm, ljl}), one can easily prove the existence of a unique solution $(y_\epsilon,v_\epsilon)\in Y(Q)\times L^2(Q_\omega)$ to the extremal problem \eqref{eq4.5}-\eqref{eq4.7}.

Applying the Lagrange principle (see \cite{avm}) to problem \eqref{eq4.5}-\eqref{eq4.7}, we obtain the following optimality system:
\begin{equation}\label{eq4.8}
\begin{cases}
Gy_\epsilon=\frac{\partial y_\epsilon}{\partial t}+\Delta^2 y_\epsilon+a_0y_\epsilon+B_0\cdot\nabla y_\epsilon+D:\nabla^2 y_\epsilon+a_1\Delta y_\epsilon=g+\chi_\omega v_\epsilon,\,\,\,\,\textit{in}\,\,\,Q,\\
y_\epsilon=\Delta y_\epsilon=0,\,\,\,\,\,\textit{on}\,\,\,\,\Sigma,\\
y_\epsilon(x,0)=y_0(x),\,\,\,\,\textit{in}\,\,\,\,\mathcal{O}
\end{cases}
\end{equation}
and
\begin{equation}\label{eq4.9}
\begin{cases}
G^*p_\epsilon=-\frac{\partial p_\epsilon}{\partial t}+\Delta^2 p_\epsilon+a_0p_\epsilon-\nabla\cdot(B_0p_\epsilon)+\sum_{i,j=1}^n\frac{\partial^2(D_{ij}p_\epsilon)}{\partial x_i\partial x_j}+\Delta (a_1p_\epsilon)=0,\,\,\,\,\textit{in}\,\,\,Q,\\
p_\epsilon=\Delta p_\epsilon=0,\,\,\,\,\,\textit{on}\,\,\,\,\Sigma,\\
p_\epsilon(x,T)=\frac{1}{\epsilon}y_\epsilon(x,T),\,\,\,\,\textit{in}\,\,\,\,\mathcal{O},\\
p_\epsilon+v_\epsilon=0,\,\,\,\,\textit{in}\,\,\,\,Q_\omega.
\end{cases}
\end{equation}
We infer from problems \eqref{eq4.8}-\eqref{eq4.9} that
\begin{align*}
\frac{1}{\epsilon}\int_{\mathcal{O}}|y_\epsilon(T)|^2\,dx-\int_{\mathcal{O}}p_\epsilon(x,0)y_0(x)\,dx=\int_Qg(x,t)p_\epsilon\,dxdt-\int_{Q_\epsilon}|v_\epsilon|^2\,dxdt,
\end{align*}
which entails that
\begin{align}\label{eq4.10}
\nonumber2\mathcal{J}_\epsilon(y_\epsilon,v_\epsilon)=&\frac{1}{\epsilon}\int_{\mathcal{O}}|y_\epsilon(T)|^2\,dx+\int_{Q_\omega}|v_\epsilon|^2\,dxdt\\
=&\int_Qg(x,t)p_\epsilon\,dxdt+\int_{\mathcal{O}}p_\epsilon(x,0)y_0(x)\,dx.
\end{align}
Employing Theorem \ref{3.22} to problem \eqref{eq4.9}, yields
\begin{align*}
&\int_Qe^{2s\alpha}\left(s^6\lambda^8\xi^6|p_\epsilon|^2+s^4\lambda^6\xi^4|\nabla p_\epsilon|^2+s^2\lambda^4\xi^2|\Delta p_\epsilon|^2+s^2\lambda^4\xi^2|\nabla^2p_\epsilon|^2\right)\,dxdt\\
\leq&C\int_Q\left(|a_0|^2|p_\epsilon|^2+(s\lambda\xi)^2|B_0|^2|p_\epsilon|^2\right)e^{2s\alpha}\,dxdt+C\int_{Q_\omega}s^7\lambda^8\xi^7|v_\epsilon|^2e^{2s\alpha}\,dxdt
\end{align*}
for any $\lambda\geq \hat{\lambda}(1+\|D\|_{L^{\infty}(Q)}^{\frac{1}{2}}+\|a_1\|_{L^{\infty}(Q)}^{\frac{1}{2}})$  and any $s\geq s_0(\lambda)(\sqrt{T}+T),$ where $|B_0|=\left(\sum_{j=1}^n|B_{0j}|^2\right)^{\frac{1}{2}}.$

Therefore, we obtain
\begin{align}\label{eq4.11}
\nonumber&\int_Qe^{2s\alpha}\left(s^6\lambda^8\xi^6|p_\epsilon|^2+s^4\lambda^6\xi^4|\nabla p_\epsilon|^2+s^2\lambda^4\xi^2|\Delta p_\epsilon|^2+s^2\lambda^4\xi^2|\nabla^2p_\epsilon|^2\right)\,dxdt\\
\leq&C\int_{Q_\omega}s^7\lambda^8\xi^7|v_\epsilon|^2e^{2s\alpha}\,dxdt
\end{align}
for any $\lambda\geq \hat{\lambda}(1+\|a_0\|_{L^{\infty}(Q)}^{\frac{1}{4}}+\|a_1\|_{L^{\infty}(Q)}^{\frac{1}{2}}+\|B_0\|_{L^{\infty}(Q)}^{\frac{1}{3}}+\|D\|_{L^{\infty}(Q)}^{\frac{1}{2}})$ and any $s\geq s_0(\lambda)(\sqrt{T}+T).$

%Let us introduce a cut-off function $\theta$ of time $t,$ such that $\theta\in\mathcal{C}^{\infty}([0,T])$ with 
%\begin{align*}
%&0\leq\theta(t)\leq 1,\,\,\,\,|\theta'(t)|\leq \frac{C}{T},\,\,\,\forall\,\,\,t\in [0,T],\\
%&\theta(t)=1,\,\,\,\,\forall\,\,\,t\in[0,\frac{T}{2}],\\
%&\theta(t)=0,\,\,\,\,\forall\,\,\,t\in[\frac{3T}{4},T].
%\end{align*}
%Denote by $q_\epsilon=\theta p_\epsilon,$ then $q_\epsilon$ satisfies the following equation
%\begin{equation}\label{4.12}
%\begin{cases}
%-\frac{\partial q_\epsilon}{\partial t}+\Delta^2 q_\epsilon+a_0p_\epsilon+\nabla\cdot(B_0q_\epsilon)+\sum_{i,j=1}^n\frac{\partial^2(D_{ij}q_\epsilon)}{\partial x_i\partial x_j}+\Delta (a_1q_\epsilon)=-\theta'p_\epsilon,\,\,\,\,\textit{in}\,\,\,Q,\\
%q_\epsilon=\Delta q_\epsilon=0,\,\,\,\,\,\textit{on}\,\,\,\,\Sigma,\\
%q_\epsilon(x,\frac{3T}{4})=0,\,\,\,\,\textit{in}\,\,\,\,\mathcal{O}.
%\end{cases}
%\end{equation}
For any $t\in [0,\frac{T}{2}]$ and any $T\geq s\geq t,$ taking the inner product of the first equation of problem \eqref{eq4.9} with $p_\epsilon$ in $L^2(\mathcal{O}\times (t,s)),$ we obtain for any $t\in [0,\frac{T}{2}]$ and any $T\geq s\geq t,$
\begin{align*}
&\frac{1}{2}\|p_\epsilon(t)\|_{L^2(\mathcal{O})}^2+\int_t^s\|\Delta p_\epsilon(r)\|_{L^2(\mathcal{O})}^2\,dr\\
\leq&\|a_0\|_{L^{\infty}(Q)}\int_t^s\|p_\epsilon(r)\|_{L^2(\mathcal{O})}^2\,dr+\|B_0\|_{L^{\infty}(Q)}\int_t^s\|p_\epsilon(r)\|_{L^2(\mathcal{O})}\|\nabla p_\epsilon(r)\|_{L^2(\mathcal{O})}\,dr\\
&+\|D\|_{L^{\infty}(Q)}\int_t^s\|p_\epsilon(r)\|_{L^2(\mathcal{O})}\|\nabla^2 p_\epsilon(r)\|_{L^2(\mathcal{O})}\,dr+\|a_1\|_{L^{\infty}(Q)}\int_t^s\|p_\epsilon(r)\|_{L^2(\mathcal{O})}\|\Delta p_\epsilon(r)\|_{L^2(\mathcal{O})}\,dr\\
&+\frac{1}{2}\|p_\epsilon(s)\|_{L^2(\mathcal{O})}^2.
\end{align*}
Thanks to
\begin{align*}
\|\nabla p_\epsilon\|_{L^2(\mathcal{O})}^2\leq &\|p_\epsilon\|_{L^2(\mathcal{O})}\|\Delta p_\epsilon\|_{L^2(\mathcal{O})},\\
\|\nabla^2p_\epsilon\|_{L^2(\mathcal{O})}^2\leq &C\left(\|p_\epsilon\|_{L^2(\mathcal{O})}^2+\|\Delta p_\epsilon\|_{L^2(\mathcal{O})}^2\right)
\end{align*}
along with Young's inequality, we conclude that there exists a generic positive constant $\mathcal{L}_1,$ such that for any $t\in [0,\frac{T}{2}]$ and any $T\geq s\geq t,$
\begin{align*}
&\|p_\epsilon(t)\|_{L^2(\mathcal{O})}^2+\int_t^s\|\Delta p_\epsilon(r)\|_{L^2(\mathcal{O})}^2\,dr\\
\leq&\mathcal{L}_1(1+\|a_0\|_{L^{\infty}(Q)}+\|B_0\|_{L^{\infty}(Q)}^2+\|D\|_{L^{\infty}(Q)}^2+\|a_1\|_{L^{\infty}(Q)}^2)\int_t^s\|p_\epsilon(r)\|_{L^2(\mathcal{O})}^2\,dr\\
&+\|p_\epsilon(s)\|_{L^2(\mathcal{O})}^2.
\end{align*}
Denote by $\beta_2=\mathcal{L}_1\left(1+\|a_0\|_{L^{\infty}(Q)}+\|B_0\|_{L^{\infty}(Q)}^2+\|D\|_{L^{\infty}(Q)}^2+\|a_1\|_{L^{\infty}(Q)}^2\right),$ in view of Gronwall's inequality, yields
\begin{align*}
\|p_\epsilon(t)\|_{L^2(\mathcal{O})}^2\leq e^{\beta_2T}\|p_\epsilon(s)\|_{L^2(\mathcal{O})}^2
\end{align*}
for any $t\in [0,\frac{T}{2}]$ and any $T\geq s\geq t,$ which implies that
\begin{align}\label{eq4.12}
\|p_\epsilon(0)\|_{L^2(\mathcal{O})}^2+\int_0^{\frac{T}{2}}\|p_\epsilon(t)\|_{L^2(\mathcal{O})}^2\,dt
\leq &(1+\frac{2}{T})e^{\beta_2T}\int_{\frac{T}{4}}^{\frac{3T}{4}}\|p_\epsilon(t)\|_{L^2(\mathcal{O})}^2\,dt.
\end{align}
Thus, we conclude from inequalities \eqref{eq4.11}, \eqref{eq4.12} and the definition of $\tilde{\xi},$ $\tilde{\alpha}$ that
\begin{align}\label{eq4.13}
\nonumber\|p_\epsilon(0)\|_{L^2(\mathcal{O})}^2+\int_0^{\frac{T}{2}}\int_{\mathcal{O}}\tilde{\xi}^6|p_\epsilon(t)|^2e^{2s\tilde{\alpha}}\,dxdt
\leq &C(\lambda,s,T)\int_{\frac{T}{2}}^{\frac{3T}{4}}\xi^6\|p_\epsilon(t)\|_{L^2(\mathcal{O})}^2e^{2s\alpha}\,dt\\
\leq &C(\lambda, s,T)\int_{Q_\omega}\xi^7|v_\epsilon|^2e^{2s\alpha}\,dxdt.
\end{align}
On the other hand, it follows from inequality \eqref{eq4.11} and the definition of $\tilde{\xi},$ $\tilde{\alpha}$ that
\begin{align}\label{eq4.14}
\int_{\frac{T}{2}}^T\int_{\mathcal{O}}\tilde{\xi}^6|p_\epsilon|^2e^{2s\tilde{\alpha}}\,dxdt
\leq C\int_{Q_\omega}s\xi^7|v_\epsilon|^2e^{2s\alpha}\,dxdt.
\end{align}
Along with inequalities \eqref{eq4.13}-\eqref{eq4.14} and $\xi^7e^{2s\alpha}\leq C$ for some generic positive constant $C$ and any $(x,t)\in Q,$ we obtain
\begin{align}\label{eq4.15}
\|p_\epsilon(0)\|_{L^2(\mathcal{O})}^2+\int_Q\tilde{\xi}^6|p_\epsilon(t)|^2e^{2s\tilde{\alpha}}\,dxdt
\leq C(\lambda, s,T)\int_{Q_\omega}|v_\epsilon|^2\,dxdt.
\end{align}
Therefore, it follows from inequalities \eqref{eq4.10}, \eqref{eq4.15} and Young's inequality that
\begin{align}\label{eq4.16}
\frac{1}{\epsilon}\int_{\mathcal{O}}|y_\epsilon(T)|^2\,dx+\int_{Q_\omega}|v_\epsilon|^2\,dxdt
\leq C(s,\lambda,T)\left(\|\tilde{\xi}^{-3}e^{-s\tilde{\alpha}}g\|_{L^2(Q)}^2+\|y_0\|_{L^2(\mathcal{O})}^2\right).
\end{align}
We conclude from the regularity theory of fourth order linear parabolic equation and inequality \eqref{eq4.16} that
\begin{equation}\label{eq4.17}
\begin{cases}
\{v_\epsilon\}_{\epsilon>0}\,\,\,\textit{is\,\,\, uniformly\,\,\, bounded\,\,\, in}\,\,\, L^2(Q_\omega),\\
\{\frac{y_\epsilon(T)}{\sqrt{\epsilon}}\}_{\epsilon>0}\,\,\,\textit{is\,\,\, uniformly\,\,\, bounded\,\,\, in}\,\,\, L^2(\mathcal{O}),\\
\{y_\epsilon\}_{\epsilon>0}\,\,\,\textit{is\,\,\, uniformly\,\,\, bounded\,\,\, in}\,\,\, Y(Q),
\end{cases}
\end{equation}
which entails that there exists a subsequence (still denote by ${(y_\epsilon,v_\epsilon)}$) of ${(y_\epsilon,v_\epsilon)}$ and a function $(y,v)\in Y(Q)\times L^2(Q_\omega),$ such that 
\begin{align}\label{eq4.18}
(y_\epsilon,v_\epsilon)\rightharpoonup (y,v)\,\,\,\, \textit{in}\,\,\,\,Y(Q)\times L^2(Q_\omega)
\end{align}
and
\begin{align*}
y_\epsilon(T)\rightarrow y(T)\,\,\,\, \textit{in}\,\,\,\, L^2(\mathcal{O}).
\end{align*}
Hence, we pass to the limit in problem \eqref{eq4.8} and obtain that the pair $(y,v)$ is a solution to problem \eqref{eq4.7}. Moreover, we infer from inequality \eqref{eq4.16} and Fatou's theorem that
\begin{align}\label{eq4.19}
\int_{Q_\omega}|v|^2\,dxdt\leq C(s,\lambda,T)\left(\|\tilde{\xi}^{-3}e^{-s\tilde{\alpha}}g\|_{L^2(Q)}^2+\|y_0\|_{L^2(\mathcal{O})}^2\right)
\end{align}
and 
\begin{align}\label{eq4.20}
y(x,T)=0,\,\,\,\forall\,\,x\in\mathcal{O}.
\end{align}
i.e., there exists a $v\in L^2(Q_\omega)$ such that the solution $y$ of problem \eqref{eq4.1} with $y(x,T)=0$ for any $x\in\mathcal{O}.$

From the fact that $(y_\epsilon,v_\epsilon)$ is the solution of to the extremal problem \eqref{eq4.5}-\eqref{eq4.7}, we deduce that for any admissible control $(w,u)\in \mathcal{V}$ with $w(T)\equiv 0,$
\begin{align*}
\mathcal{J}_\epsilon(y_\epsilon,v_\epsilon)\leq \mathcal{J}_\epsilon(w,u)=\frac{1}{2}\int_{Q_\omega}|u|^2\,dxdt,
\end{align*}
 which implies that
\begin{align*}
\int_{Q_\omega}|v_\epsilon|^2\,dxdt\leq \int_{Q_\omega}|u|^2\,dxdt
\end{align*}
and
\begin{align}\label{eq4.21}
\limsup_{\epsilon\rightarrow 0}\int_{Q_\omega}|v_\epsilon|^2\,dxdt\leq \int_{Q_\omega}|v|^2\,dxdt.
\end{align}
It follows from \eqref{eq4.18} and Fatou's theorem that
\begin{align}\label{eq4.22}
\int_{Q_\omega}|v|^2\,dxdt\leq\liminf_{\epsilon\rightarrow 0}\int_{Q_\omega}|v_\epsilon|^2\,dxdt\leq \int_{Q_\omega}|u|^2\,dxdt
\end{align}
for any admissible control $(w,u)\in \mathcal{V}$ with $w(T)=0,$ i.e., $v$ is a control of minimal norm in $L^2(Q_\omega)$ among the admissible controls (such that \eqref{eq4.2} is satisfied) and $v_\epsilon\rightarrow v$ in $L^2(Q_\omega).$
\end{proof}
In what follows, we will prove the null controllability of problem \eqref{eq1.1} by the Leray-Schauder's fixed point theorem.
\begin{theorem}\label{th4.2}
Let the functions $\tilde{\xi},$ $\tilde{\alpha}$ be defined by \eqref{eq4.3} and \eqref{eq4.4}, respectively. If $\tilde{\xi}^{-3}e^{-s\tilde{\alpha}}g\in L^2(Q)$ and $y_0\in H_0^1(\mathcal{O})\cap H^2(\mathcal{O}).$ Then there exists at least one control $v\in L^2(Q_\omega)$ such that the solution $y$ of problem \eqref{eq1.1} satisfies $y(x,T)\equiv 0$ for any $x\in\mathcal{O}.$ 
\end{theorem}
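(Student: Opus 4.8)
The plan is to reduce the semilinear problem \eqref{eq1.1} to a family of linear null-controllability problems of the form \eqref{eq4.1} and to obtain a solution as a fixed point of the associated solution operator via the Leray--Schauder theorem. Since $F$ is locally Lipschitz, I would first linearize it along the segment joining $0$ to the state: by Rademacher's theorem the partial derivatives of $F$ exist almost everywhere and are bounded on bounded sets, so one may write
$$F(z,\nabla z,\nabla^2 z)-F(0,0,0)=a^z z+B^z\cdot\nabla z+D^z:\nabla^2 z,$$
with
$$a^z=\int_0^1\partial_1 F(\sigma z,\sigma\nabla z,\sigma\nabla^2 z)\,d\sigma$$
and $B^z,D^z$ defined analogously from $\partial_2 F,\partial_3 F$. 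For a frozen $z$ these coefficients are absorbed into $a_0,B_0,D$ of \eqref{eq4.1}, so that Theorem \ref{th4.1} applies to the linearized equation with external force $g+F(0,0,0)$ and produces a null control together with a controlled trajectory; this defines the map $\Lambda:z\mapsto y$, whose fixed points solve \eqref{eq1.1} with $y(\cdot,T)\equiv 0$.

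Next I would fix the functional setting. Theorem \ref{th4.1} requires $\tilde\xi^{-3}e^{-s\tilde\alpha}$ times the source to lie in $L^2(Q)$, and the estimate \eqref{eq4.19} bounds the resulting control in that weighted norm. I would therefore work in a weighted Banach space $Z$ in which $z,\nabla z,\nabla^2 z$ are square integrable against $\tilde\xi^{-6}e^{-2s\tilde\alpha}$, which is exactly the regularity needed to guarantee $\tilde\xi^{-3}e^{-s\tilde\alpha}F(z,\nabla z,\nabla^2 z)\in L^2(Q)$ through the linear growth afforded by local Lipschitz continuity, and prove that $\Lambda$ maps a suitable closed ball of $Z$ into itself. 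The self-mapping property would combine \eqref{eq4.19} with a weighted parabolic regularity estimate for \eqref{eq4.1} upgrading the control-norm bound to a bound on $y,\nabla y,\nabla^2 y$ in $Z$, the weighted second-order elliptic estimates already used in the proof of Theorem \ref{3.22} being the natural tool for the $\nabla^2 y$ term.

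To run Leray--Schauder I would then verify continuity and compactness of $\Lambda$. Continuity follows from the continuous dependence of the minimal-norm control on the data in Theorem \ref{th4.1}; compactness follows from a parabolic smoothing argument of Aubin--Lions type, since $\Lambda(z)$ solves a fourth-order parabolic equation and gains time regularity relative to $z$. It remains to establish the a priori bound on the set of $y$ with $y=\sigma\Lambda(y)$ for some $\sigma\in[0,1]$: here $\Lambda(y)=\sigma^{-1}y$ solves the linear controlled problem with source $g+F(y,\nabla y,\nabla^2 y)$, so \eqref{eq4.19} bounds $\|y\|_Z$ in terms of the weighted norm of this source uniformly in $\sigma$, and the at-most-linear growth of $F$ is what lets this be closed into a genuine bound on $\|y\|_Z$.

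The hard part will be the interplay between the fourth-order operator and the fact that $F$ depends on $\nabla^2 y$, which sits at the critical order for the drift of $\Delta^2$: the linearized coefficient $D^z$ contributes to $\|D\|_{L^\infty(Q)}$, on which both the threshold $\hat\lambda$ and the constant $C$ in Theorem \ref{3.22} depend. The Carleman constant therefore degenerates unless $\|D^z\|_{L^\infty(Q)}$ remains uniformly bounded, which constrains the fixed-point iteration to a region where the local Lipschitz constant of $F$ dominates $D^z$. I expect this confinement --- reconciling the self-mapping and a priori estimates with a \emph{uniform} Carleman constant, most likely by first truncating $F$ to make the coefficients globally bounded and then removing the truncation a posteriori using the a priori bound --- to be the genuine obstacle of the proof.
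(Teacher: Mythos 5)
Your skeleton coincides with the paper's: write $F(u,p,r)-F(0,0,0)=u\int_0^1 F_y\,d\tau+p\cdot\int_0^1\nabla_pF\,d\tau+r:\int_0^1 F_r\,d\tau$, freeze the resulting coefficients at $z$ so that the linearized problem \eqref{eq4.23} falls under Theorem \ref{th4.1} with source $g+F(0,0,0)$, take the minimal-norm control $v^z$, set $\Lambda(z)=y^z$, and close with continuity, compactness and a fixed-point theorem. Two remarks on where your plan diverges. First, the weighted space $Z$ is unnecessary: since the nonlinearity is absorbed into the coefficients rather than into the source, the only external force is $g+F(0,0,0)$, and the fixed-point space can be (and in the paper is) the unweighted $L^2(0,T;H_0^1(\mathcal{O})\cap H^2(\mathcal{O}))$; the invariant ball comes from the $z$-independent bound \eqref{eq4.27}, and compactness from the embedding of $X=L^2(0,T;H^4(\mathcal{O})\cap H_0^1(\mathcal{O}))\cap H^1(0,T;L^2(\mathcal{O}))$ into that space, so what is actually used is Schauder's theorem for a self-map of a ball, not the Leray--Schauder homotopy estimate $y=\sigma\Lambda(y)$ you describe. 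Second, the obstacle you single out at the end --- uniformity of the Carleman threshold and constant in $\|D^z\|_{L^\infty(Q)}$ --- is real, but the paper does not resolve it by truncation: it simply assumes in the proof that $F\in W^{1,\infty}(\mathbb{R}\times\mathbb{R}^n\times\mathbb{R}^{n^2};\mathbb{R})$, so that $|G_1|+|G_2|+|G_3|\le M$ uniformly in $z$ and the constants in Theorem \ref{3.22} and in \eqref{eq4.25} are fixed once and for all. This is strictly stronger than the ``locally Lipschitz'' of the statement; your truncation idea would indeed be needed to go beyond it, but removing the truncation a posteriori requires an a priori $L^\infty$ bound on $(y,\nabla y,\nabla^2 y)$ that neither your argument nor the paper supplies, so as written your plan leaves that step genuinely open while the paper closes it by strengthening the hypothesis.
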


\begin{proof}
Let $z\in L^2(0,T;H_0^1(D)\cap H^2(D))$ be given, consider the following problem 
\begin{equation}\label{eq4.23}
\begin{cases}
\frac{\partial y}{\partial t}+\Delta^2y+a_0y+B_0\cdot\nabla y+D:\nabla^2y+a_1\Delta y=G_1(z,\nabla z,\nabla^2z)y+G_2(z,\nabla z,\nabla^2z)\cdot \nabla y\\
+G_3(z,\nabla z,\nabla^2z): \nabla^2 y+F(0,0,0)+v\chi_\omega+g,\,\,\,\,(x,t)\in Q,\\
y=\Delta y=0,\,\,(x,t)\in\Sigma,\\
y(x,0)=0,\,\,\,\,\,x\in \mathcal{O},
\end{cases}
\end{equation}
where
\begin{align*}
&G_1(w,\nabla w,\nabla^2 w)=\int_0^1\frac{\partial F}{\partial y}(\tau w,\tau\nabla w,\tau\nabla^2w)\,d\tau,\\
&G_2(w,\nabla w,\nabla^2w)=\int_0^1\nabla_pF(\tau w,\tau\nabla w,\tau\nabla^2w)\,d\tau,\\
&G_3^{ij}(w,\nabla w,\nabla^2w)=\int_0^1\frac{\partial F}{\partial r_{ij}}(\tau w,\tau\nabla w,\tau\nabla^2w)\,d\tau,\\
&r_{ij}=\frac{\partial^2y}{\partial x_i\partial x_j},\,\,\,1\leq i,j\leq n.
\end{align*}
Since $F\in W^{1,\infty}(\mathbb{R}\times\mathbb{R}^n\times\mathbb{R}^{n^2},\mathbb{R}),$ there exists a positive constant $M,$ such that
\begin{align*}
|G_1(u,p,r)|+|G_2(u,p,r)|+|G_3(u,p,r)|\leq M,\,\,\,\,\,\forall\,\,\,\,(u,p,r)\in\mathbb{R}\times\mathbb{R}^n\times\mathbb{R}^{n^2}
\end{align*}
and
\begin{align*}
|F_y(u,p,r)|+|\nabla_pF(u,p,r)|+\sum_{i,j=1}^n\left|\frac{\partial F}{\partial r_{ij}}(u,p,r)\right|\leq M,\,\,\,\,\,\forall\,\,\,\,(u,p,r)\in\mathbb{R}\times\mathbb{R}^n\times\mathbb{R}^{n^2}.
\end{align*}

From Theorem \ref{th4.1}, we conclude that there exists at least one control $v\in L^2(Q_\omega),$ such that the solution $y^z$ of problem \eqref{eq4.23} satisfies
\begin{align}\label{eq4.24}
y^z(x,T)\equiv 0,\,\,\forall\,\,x\in \mathcal{O}.
\end{align}
Moreover, we also have
\begin{align}\label{eq4.25}
\|v\|_{L^2(Q_\omega)}\leq C(s,\lambda,T)\left(\|\tilde{\xi}^{-3}e^{-s\tilde{\alpha}}g\|_{L^2(Q)}^2+\|y_0\|_{L^2(\mathcal{O})}^2\right).
\end{align}
In what follows, we denote by $v^z$ the control with the minimal $L^2(Q_\omega)$-norm in the set of 
the controls such that the solution $y^z$ of problem \eqref{eq4.23} corresponding to $z$ satisfies $y^z(T)\equiv 0.$

From the regularity theory of parabolic equations, we conclude that there exists a unique weak solution $y^z\in X=L^2(0,T;H_0^1(\mathcal{O})\cap H^4(\mathcal{O}))\cap H^1(0,T;L^2(\mathcal{O})).$  Moreover, since $F\in W^{1,\infty}(\mathbb{R}\times\mathbb{R}^n\times\mathbb{R}^{n^2};\mathbb{R}),$  there exists a positive constant $C$ independent of $z,$ such that
\begin{align}\label{eq4.26}
\nonumber\|y^z\|_X\leq  &C(\|y_0\|_{H^2(\mathcal{O})}+\|F(0,0,0)+v^z\chi_\omega+g\|_{L^2(Q)})\\
\leq  &C(1+\|y_0\|_{H^2(\mathcal{O})}+\|v^z\|_{L^2(Q_\omega)}+\|g\|_{L^2(Q)}).
\end{align}
Thus, along with inequalities \eqref{eq4.25}-\eqref{eq4.26}, we deduce that there exists a positive constant $\mathcal{L}_1$ independent of $z,$ such that 
\begin{align}\label{eq4.27}
\|y^z\|_X\leq  \mathcal{L}_1\left(1+\|y_0\|_{H^2(\mathcal{O})}+\|\tilde{\xi}^{-3}e^{-s\tilde{\alpha}}g\|_{L^2(Q)}\right).
\end{align}
Define $\Lambda:L^2(0,T;H_0^1(\mathcal{O})\cap H^2(\mathcal{O}))\rightarrow L^2(0,T;H_0^1(\mathcal{O})\cap H^2(\mathcal{O}))$ by
\begin{align*}
\Lambda(z)=y^z,
\end{align*}
then the mapping $\Lambda$ is well-defined. In what follows, we will prove the existence of a fixed point for the operator $\Lambda$ by the Leray-Schauder's fixed points Theorem. To this purpose, we will first prove that $\Lambda$ is continuous, i.e., if $z_k\rightarrow z$ in $L^2(0,T;H_0^1(\mathcal{O})\cap H^2(\mathcal{O})),$ we have $\Lambda(z_k)\rightarrow \Lambda(z).$

Denote by $y^k=\Lambda(z_k),$ where $(y^k,q^k)$ is the solution of problem
\begin{equation}\label{eq4.28}
\begin{cases}
\frac{\partial y^k}{\partial t}+\Delta^2y^k+a_0y^k+B_0\cdot\nabla y^k+B:\nabla^2y^k+a_1\Delta y^k=G_1(z_k,\nabla z_k,\nabla^2 z_k)y^k\\
+G_2(z_k,\nabla z_k,\nabla^2 z_k)\cdot \nabla y^k+G_3(z_k,\nabla z_k,\nabla^2 z_k): \nabla^2 y^k+F(0,0,0)+v_{z_k}\chi_\omega+g,\,\,\,\,(x,t)\in Q,\\
y^k=\Delta y^k=0,\,\,\,\,\,(x,t)\in\Sigma,\\
y^k(x,0)=0,\,\,\,\,\,x\in\mathcal{O}.
\end{cases}
\end{equation}
It follows from inequality \eqref{eq4.27} and the fact that $z_k\rightarrow z$ in $L^2(0,T;H_0^1(\mathcal{O})\cap H^2(\mathcal{O}))$ that
\begin{align*}
&\{y^k\}_{k=1}^{\infty}\,\,\,\textit{is\,\,\,uniformly\,\,\,bounded\,\,in}\,\,X,\\
&\{v^{z_k}\}_{k=1}^{\infty}\,\,\,\textit{is\,\,\,uniformly\,\,\,bounded\,\,in}\,\,L^2(Q_\omega),
\end{align*}
which entails that there exists a subsequence of $\{y^k\}_{k=1}^\infty,$ $\{v^{z_k}\}_{k=1}^\infty$ (still denote by themselves) and $y\in X,$ $v\in L^2(Q_\omega),$ such that
\begin{align*}
&y^k\rightharpoonup y\,\,\,\textit{in}\,\,X\,\,\,\textit{as}\,\,k\rightarrow+\infty,\\
&y^k\rightarrow y\,\,\,\textit{in}\,\,L^2(0,T;H_0^1(\mathcal{O})\cap H^2(\mathcal{O}))\,\,\,\textit{as}\,\,k\rightarrow+\infty,\\
&v^{z_k}\rightharpoonup v\,\,\,\textit{in}\,\,L^2(Q_\omega)\,\,\,\textit{as}\,\,k\rightarrow+\infty.
\end{align*}
Since $F\in W^{1,\infty}(\mathbb{R}\times\mathbb{R}^n\times\mathbb{R}^{n^2},\mathbb{R}),$ we conclude that there exists a subsequence of $\{G_1(z_k,\nabla z_k,\nabla^2 z_k)\}_{k=1}^\infty,$ $\{G_2(z_k,\nabla z_k,\nabla^2 z_k)\}_{k=1}^\infty,$ $\{G_3(z_k,\nabla z_k,\nabla^2 z_k)\}_{k=1}^\infty,$ (still denote by themselves), such that
\begin{align*}
&G_1(z_k,\nabla z_k,\nabla^2z_k)\rightarrow G_1(z,\nabla z,\nabla^2 z)\,\,\,\textit{weakly\,\,star\,\,in}\,\,L^{\infty}(Q),\,\,\textit{as}\,\,k\rightarrow+\infty,\\
&G_2(z_k,\nabla z_k,\nabla^2z_k)\rightarrow G_2(z,\nabla z,\nabla^2z)\,\,\,\textit{weakly\,\,star\,\,in}\,\,L^{\infty}(Q),\,\,\textit{as}\,\,k\rightarrow+\infty,\\
&G_3(z_k,\nabla z_k,\nabla^2z_k)\rightarrow G_3(z,\nabla z,\nabla^2z)\,\,\,\textit{weakly\,\,star\,\,in}\,\,L^{\infty}(Q),\,\,\textit{as}\,\,k\rightarrow+\infty.
\end{align*}
Let $k\rightarrow+\infty$ in problem \eqref{eq4.28}, we obtain
\begin{equation}\label{eq4.29}
\begin{cases}
\frac{\partial y}{\partial t}+\Delta^2y+a_0y+B_0\cdot\nabla y+B:\nabla^2y+a_1\Delta y=G_1(z,\nabla z,\nabla^2 z)y+G_2(z,\nabla z,\nabla^2z)\cdot \nabla y\\
+G_3(z,\nabla z,\nabla^2z): \nabla^2 y+F(0,0,0)+v\chi_\omega+f,\,\,\,\,(x,t)\in Q,\\
y=\Delta y=0,\,\,\,\,\,(x,t)\in\Sigma,\\
y(x,0)=y_0(x),\,\,\,\,\,x\in \mathcal{O}
\end{cases}
\end{equation}
and
\begin{align}\label{eq4.30}
y(x,T)\equiv 0,\,\,\forall\,\,x\in \mathcal{O},
\end{align}
which entails that $y=\Lambda(z).$ Thus, we have proved that $\Lambda(z_k)\rightarrow\Lambda(z)$ in $L^2(0,T;H_0^1(\mathcal{O})\cap H^2(\mathcal{O})),$ i.e., the mapping $\Lambda:L^2(0,T;H_0^1(\mathcal{O})\cap H^2(\mathcal{O}))\rightarrow L^2(0,T;H_0^1(\mathcal{O})\cap H^2(\mathcal{O}))$ is continuous. Thanks to the compactness of $X\subset L^2(0,T;H_0^1(\mathcal{O})\cap H^2(\mathcal{O}))$ and inequality \eqref{eq4.27}, we conclude that the mapping $\Lambda:L^2(0,T;H_0^1(\mathcal{O})\cap H^2(\mathcal{O})))\rightarrow L^2(0,T;H_0^1(\mathcal{O})\cap H^2(\mathcal{O}))$ is compact. 
Denote by 
\begin{align*}
\mathcal{R}_1=\mathcal{L}_1\left(1+\|y_0\|_{H^2(\mathcal{O})}+\|\tilde{\xi}^{-3}e^{-s\tilde{\alpha}}g\|_{L^2(Q)}\right)
\end{align*}
and
\begin{align*}
B=\{u\in L^2(0,T;H_0^1(\mathcal{O})\cap H^2(\mathcal{O})):\|u\|_{L^2(0,T;H_0^1(\Omega)\cap H^2(\mathcal{O}))}\leq\mathcal{R}_1\},
\end{align*}
then $\Lambda: B\rightarrow B.$ Thus, we can employ the Leray-Schauder's fixed points Theorem to conclude that the operator $\Lambda$ possesses at least one fixed point $y\in L^2(0,T;H_0^1(\Omega)\cap H^2(\mathcal{O})).$ That is, for any $y_0\in H^2(\mathcal{O})\cap H_0^1(\mathcal{O}),$ there exist at least one control $v\in L^2(Q_\omega),$ such that the corresponding solutions to problem \eqref{eq1.1} satisfy $y(x,T)\equiv 0$ for any $x\in \mathcal{O}.$
\end{proof}

\begin{remark}
Under the same assumptions as in Theorem \ref{th4.2}. If the function $F(y,\nabla y,\nabla^2y)$ is independent of $\nabla y$ and $\nabla^2y,$ i.e., $F(y,\nabla y,\nabla^2y)\equiv G(y)$ for some locally Lipschitz continuous function $G,$  then for any $y_0\in L^2(\mathcal{O}),$ the same conclusion as in Theorem \ref{th4.2} remains true.
\end{remark}

\section*{Acknowledgement}
Partial financial support was received from the National Science Foundation of China Grant (11801427, 11871389) and the Fundamental Research Funds for the Central Universities (xzy012022008, JB210714).

\bibliographystyle{abbrv}
\bibliography{BIB}
\end{document}